\theoremstyle{definition}  
\newtheorem{thm}{Theorem}[section]
\newtheorem{Theorem}[thm]{Theorem}
\newtheorem{Proposition}[thm]{Proposition}
\theoremstyle{definition}  
\newtheorem{Remark}[thm]{Remark}
\newtheorem*{acknowledgment}{Acknowledgment}
\theoremstyle{definition}  
\newcommand{\RR}{\mathbb{R}}
\newcommand{\PPP}{\mathcal{P}}
\newcommand{\zz}{\boldsymbol{z}}
\newcommand{\zero}{\boldsymbol{0}}
\newcommand{\Ker}{\operatorname{Ker}}
\newcommand{\Ann}{\operatorname{Ann}}
\DeclareSymbolFont{symbolsC}{U}{txsyc}{m}{n}
\DeclareMathSymbol{\MYPerp}{\mathrel}{symbolsC}{121}
\newcommand{\der}{\partial}
\newbox{\MyTrashBox}
\begin{document}

\title{On Artinian Gorenstein algebras associated to the face posets of regular polyhedra}
\author[A. Yazawa]{Akiko Yazawa}
\address[Akiko Yazawa]{Department of Science and Technology,
	Graduate School of Medicine, Science and Technology,
	Shinshu University,
	Matsumoto, Nagano, 390-8621, Japan}
\email{yazawa@math.shinshu-u.ac.jp}

\date{}
\maketitle


\begin{abstract} 
We introduce Artinian Gorenstein algebras defined by the face posets of regular polyhedra. 
We consider the strong Lefschetz property and Hodge--Riemann relation for the algebras. 
We show the strong Lefschetz property of the algebras for all Platonic solids. 
On the other hand, for some Platonic solids, we show that the algebras do not satisfy the Hodge--Riemann relation with respect to some strong Lefschetz elements.  
\end{abstract}


\section{Introduction}\label{Introduction}

A matroid is a simplicial complex with the independence augmentation property. 
Facets of a matroid are called bases for the matroid. 
They satisfy a property of ``symmetricity'', so called the basis exchange property. 
For a matroid $M$, define the polynomial 
\begin{align*}
F_{M}=\sum_{F}\prod_{i\in F}x_{i}
\end{align*}
where $F$ runs the collection of facets of $M$. 
The polynomials are studied from various viewpoints. 
In \cite{ALOV2018, MR4003314, MR3899575, BH2018, MR4172622}, 
it was shown that the polynomials are log-concave on the positive orthant. 
For a polynomial $F$, to study log-concavity and to study the Hessian matrix are equivalent. 
More precisely, $F$ is log-concave on the positive orthant if and only if the Hessian matrix $H_{F}(\bm{a})$ of $F$ has exactly one positive eigenvalue for $\bm{a}$ in the positive orthant. 
In \cite{MNY2020}, it was shown that the polynomials are strictly log-concave on the positive orthant, 
equivalently, the Hessian matrices have exactly one positive eigenvalue and are not degenerate. 
For an Artinian Gorenstein algebra associated to the polynomial, it was also shown that the strong Lefschetz property and Hodge--Riemann relation at degree one.  

We introduce the homogeneous polynomial $F_{\PPP}$ for a regular polyhedron $\PPP$ as an analogue of above polynomials. 
For a regular polyhedra $\PPP$, we define a homogeneous polynomial by
\begin{align*}
F_{\PPP}=\sum_{F\in F(\PPP)}\prod_{v\in F}x_{v}, 
\end{align*}
where $F(\PPP)$ is the collection of the facets, and $v$ is a $0$-dimensional face. 
Then we consider the Artinian Gorenstein algebra 
\begin{align*}
A_{\PPP}=\RR[\der_{v}|v\in V(\PPP)]/\Ann(F_{\PPP}), 
\end{align*}
where $\der_{v}$ is the partial derivative operator of $x_{v}$, and $V(\PPP)$ is the collection of $0$-dimensional faces.  
We discuss the strong Lefschetz property of $A_{\PPP}$ and Hodge--Riemann relation with respect to the Poincar\'e duality 
\begin{align*}
P_{F_{\PPP}}^{k}: A_{k}\times A_{s-k}\to \RR, && (f,g)\mapsto fgF_{\PPP}. 
\end{align*}

In this paper, we consider the Platonic solids. 
And, we show the following, and study the strong Lefschetz elements. 
\begin{Theorem}[cf.\ Theorems \ref{n=4}, \ref{n=6}, \ref{n=8}, \ref{n=12} and \ref{n=20}]
The following hold for the Platonic solids: 
\begin{itemize}
\item The Artinian Gorenstein algebras have the strong Lefschetz property. 
\item For the regular tetrahedron and octahedron, the algebra satisfy the Hodge--Riemann relation on the positive orthant. 
For the others, the algebras do not satisfy the Hodge--Riemann relation with respect to some strong Lefschetz elements. 
\end{itemize}
\end{Theorem}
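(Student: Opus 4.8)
The plan is to reduce both properties to the higher Hessian criterion used in \cite{MNY2020} and its references: for the Artinian Gorenstein algebra $A_{\PPP}=\RR[\der_v]/\Ann(F_{\PPP})$ of socle degree $s$, a linear form $\ell_{\bm a}=\sum_v a_v\der_v$ is a strong Lefschetz element precisely when the higher Hessian determinants $\det\mathrm{Hess}_k(F_{\PPP})(\bm a)$ are nonzero for all $0\le k\le s/2$, where $\mathrm{Hess}_k$ is the Gram matrix $(\mu_i\mu_jF_{\PPP})_{i,j}$ of a monomial basis $\{\mu_i\}$ of $(A_{\PPP})_k$; moreover the Hodge--Riemann relation at $\bm a$ asserts that each $\mathrm{Hess}_k(F_{\PPP})(\bm a)$ has the signature dictated by the primitive Lefschetz decomposition, which at degree one means exactly one positive eigenvalue. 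I would fix the distinguished point $\boldsymbol 1$, which is fixed by the automorphism group $G$ of $\PPP$, and use the $G$-action to block-diagonalise each $\mathrm{Hess}_k(F_{\PPP})(\boldsymbol 1)$ before reading off ranks and signatures. Throughout one computes inside $A_{\PPP}$, so the vertex-indexed matrices must be restricted to a basis of $(A_{\PPP})_k$; equivalently their kernels record the linear and higher syzygies of $F_{\PPP}$.

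I would first dispatch the three triangular solids, where $s=3$ and only $k=1$ is substantial. Since $F_{\PPP}$ is squarefree, $\der_v^2F_{\PPP}=0$, so the diagonal of $\mathrm{Hess}_1(F_{\PPP})(\boldsymbol 1)$ vanishes, while for $u\ne v$ the entry $\der_u\der_vF_{\PPP}(\boldsymbol 1)$ counts the facets containing both vertices; for triangular faces this is $2$ on edges and $0$ otherwise, so $\mathrm{Hess}_1(F_{\PPP})(\boldsymbol 1)=2A$ is twice the adjacency matrix of the $1$-skeleton. Hence $\boldsymbol 1$ is a Lefschetz element iff the skeleton has no zero eigenvalue on $(A_{\PPP})_1$, and the Hodge--Riemann relation at $\boldsymbol 1$ holds iff the skeleton has exactly one positive eigenvalue, i.e.\ classically iff it is complete multipartite. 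The tetrahedron ($K_4$) and octahedron ($K_{2,2,2}$) qualify; for the octahedron the factorisation $F_{\PPP}=(x_1+x_4)(x_2+x_5)(x_3+x_6)$ forces $\der_i=\der_{i+3}$, so $(A_{\PPP})_1$ is three-dimensional with form $(1,2)$. The icosahedral graph is $5$-regular with spectrum $\{5,\sqrt5^{(3)},(-1)^{(5)},(-\sqrt5)^{(3)}\}$, giving four positive eigenvalues, so $\boldsymbol 1$ is a strong Lefschetz element at which the Hodge--Riemann relation fails. To upgrade the tetrahedron and octahedron from $\boldsymbol 1$ to the whole positive orthant I would argue globally: $F_{\PPP}=e_3(x_1,\dots,x_4)$ is the basis generating polynomial of $U_{3,4}$, so its relation on the positive orthant is the matroid statement of \cite{MNY2020}; and for the product of linear forms the weighted Hessian $\bigl(\begin{smallmatrix}0&u_3&u_2\\ u_3&0&u_1\\ u_2&u_1&0\end{smallmatrix}\bigr)$, with $u_i=a_i+a_{i+3}>0$, has trace $0$ and determinant $2u_1u_2u_3>0$, hence signature $(1,2)$ for every positive $\bm a$.

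For the cube and the dodecahedron I would again test $\boldsymbol 1$. For the cube, with vertices $(\ZZ/2)^3$, the edge graph $E$ and the face-diagonal graph $D$ are translation invariant, so $\mathrm{Hess}_1(F_{\PPP})(\boldsymbol 1)=2E+D$ is simultaneously diagonalised by the characters of $(\ZZ/2)^3$; a short computation yields the eigenvalues $9,\,1^{(3)},\,(-3)^{(4)}$. Thus $(A_{\PPP})_1$ is eight-dimensional, $\boldsymbol 1$ is Lefschetz at degree one, and the Hodge--Riemann relation already fails there with four positive eigenvalues; one then checks $\det\mathrm{Hess}_2(F_{\PPP})\not\equiv0$ for the strong Lefschetz property, and if $\mathrm{Hess}_2(\boldsymbol 1)$ were singular one perturbs inside the open set $\{\det\mathrm{Hess}_1\ne0\}$, on which the degree-one signature is locally constant, to reach a genuine strong Lefschetz element with the same wrong signature. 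For the dodecahedron ($s=5$, twenty vertices) the entry $\der_u\der_vF_{\PPP}(\boldsymbol 1)$ equals $2$ on edges and $1$ on pentagon diagonals, and I would decompose $(A_{\PPP})_1$ and $(A_{\PPP})_2$ into irreducible representations of the icosahedral group to block-diagonalise $\mathrm{Hess}_1$ and $\mathrm{Hess}_2$, verify that each determinant is nonzero (strong Lefschetz property) and that the degree-one signature has more than one positive eigenvalue (failure of the relation).

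The main obstacle is the dodecahedron. Here $(A_{\PPP})_1$ has dimension up to twenty and $(A_{\PPP})_2$ is considerably larger, and because $s=5$ one genuinely needs the second Hessian, so the representation-theoretic block-diagonalisation is unavoidable and the bookkeeping of multiplicities, determinants, and eigenvalue signs is delicate; this is where I expect the real work to lie. A secondary, more conceptual point is the passage, for the tetrahedron and octahedron, from nondegeneracy and signature at the single point $\boldsymbol 1$ to the Hodge--Riemann relation on the entire positive orthant, which relies on the structural descriptions above rather than on one eigenvalue computation.
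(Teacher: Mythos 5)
Your framework---reducing the strong Lefschetz property to non-vanishing of $\det H^{k}_{F_{\PPP}}(\bm{a})$ for $k\le s/2$ (Theorem \ref{SLP and Hess}) and degree-one Hodge--Riemann to the signature of $H^{1}_{F_{\PPP}}(\bm{a})$ (Proposition \ref{HRR and Hess})---is exactly the paper's, and four of the five solids are handled correctly and in essentially the paper's way. For the tetrahedron and icosahedron ($s=3$, so only $k\le 1$ matters) the Hessian at $\bm{1}$ is twice the adjacency matrix of $K_{4}$, resp.\ of the icosahedral graph, and your spectra match the paper's ($6,-2^{(3)}$ and $10,\,2\sqrt5^{(3)},\,-2^{(5)},\,-2\sqrt5^{(3)}$, cf.\ Proposition \ref{eigen n=20}, which the paper derives by a circulant block decomposition rather than by quoting the known spectrum). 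For the cube your $(\ZZ/2)^{3}$-character diagonalisation gives $9,1^{(3)},(-3)^{(4)}$ as in Proposition \ref{eigen n=6}; your perturbation fallback for the second Hessian is not needed, since the paper checks that $H^{2}_{F_{\PPP}}(\bm{1})$ is block diagonal with blocks $\bigl(\begin{smallmatrix}0&1\\1&0\end{smallmatrix}\bigr)$, hence non-degenerate. For the octahedron your factorisation of $F_{\PPP}$ into the three sums of antipodal variables is a cleaner route to the paper's reduction $A_{\PPP}\cong\RR[t_{1},t_{2},t_{3}]/(t_{1}^{2},t_{2}^{2},t_{3}^{2})$ (Proposition \ref{prop reduce}, proved there by a Gr\"obner basis computation), and your trace/determinant argument for signature $(1,2)$ at every positive point is a nice self-contained replacement for the citation of \cite{NY2019,MNY2020}.

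The genuine gap is the dodecahedron, precisely where you located ``the real work.'' Your plan is to evaluate both Hessians at the symmetric point $\bm{1}$, block-diagonalise by icosahedral symmetry, and verify that the determinants are nonzero. That verification cannot succeed: for the dodecahedron $\det H^{1}_{F_{\PPP}}(\bm{1})=0$ \emph{and} $\det H^{2}_{F_{\PPP}}(\bm{1})=0$, i.e.\ $\sum_{v}\der_{v}$ fails to be a Lefschetz element already in degree one---this degeneracy is exactly the phenomenon the paper highlights, and it is why the symmetric point, which worked for the other four solids, is useless here. No representation-theoretic bookkeeping at $\bm{1}$ can establish the strong Lefschetz property or exhibit the Hodge--Riemann failure; one must leave the symmetric locus. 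The paper does this with two different asymmetric points: $\bm{b}$ (coordinates $1$ except $0$ on the five vertices $6,\dots,10$), where $H^{1}(\bm{b})$ is non-degenerate with at least two positive eigenvalues (Proposition \ref{eigen n=12 Hess1}: one block $A'$ contributes the eigenvalue $4$, and the complementary non-degenerate trace-zero block contributes another positive eigenvalue), giving the Hodge--Riemann failure, but where $\det H^{2}(\bm{b})=0$; and $\bm{c}$ (coordinates $1$ except $0$ at a single vertex), where both $\det H^{1}(\bm{c})\neq0$ and $\det H^{2}(\bm{c})\neq0$, the latter a $90\times90$ determinant evaluated by computer algebra (Proposition \ref{eigen n=12 Hess2}). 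So your proposal is missing the key idea for this case---hunting for suitable non-symmetric evaluation points, in practice with machine assistance---and, as a finer point your perturbation remark would actually help repair: the element $\ell_{\bm{b}}$ witnessing the Hodge--Riemann failure is itself \emph{not} a strong Lefschetz element (its second Hessian determinant vanishes), so matching the statement's phrasing ``with respect to some strong Lefschetz elements'' requires perturbing $\bm{b}$ inside the open set where both Hessian determinants are nonzero while keeping the wrong signature.
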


This paper is organized as follows: 
In Section \ref{SLP}, we recall the strong Lefschetz property and Hodge--Riemann relation. 
Then we see the relation between the strong Lefschetz property and the Hessian matrices, and between the Hodge--Riemann relation and the Hessian matrices. 
In Section \ref{Main results}, we discuss the strong Lefschetz property and Hodge--Riemann relation for the Artinian Gorenstein algebras defined by the Platonic solids. 
In Section \ref{Hessian matrices}, we calculate the Hessian matrices of the homogeneous polynomials of the Platonic solids. 

\begin{acknowledgment}
This work was supported by the Sasakawa Scientific Research Grant from The Japan Science Society.
\end{acknowledgment}


\section{Strong Lefchetz property, Hodge--Riemann relation and Hessian matrices}\label{SLP}

We recall some properties for a graded algebra, the strong Lefschetz property and Hodge--Riemann relation. 

Let $A=\bigoplus_{k=0}^{s} A_{k}$, $A_{s}\neq \zero$, be a graded Artinian algebra with a symmetric bilinear map $P^{k}$ from $A_{k}\times A_{s-k}$ to $\RR$. 
For a graded algebra $A=\bigoplus_{k=0}^{s} A_{k}$, 
define $h_{k}$ by the dimension of $k$-th homogeneous component of $A$. 
We call the sequence $(h_{0}, h_{1}, \ldots, h_{s})$ the \emph{Hilbert series} of $A$. 

We say that $A$ has the \emph{strong Lefschetz property} 
if there exists an element $\ell \in A_{1}$ such that the linear map 
\begin{align}\label{slp}
\times \ell^{s-2k}\colon A_{k}\to A_{s-k}, && f\mapsto \ell^{s-2k}f
\end{align}
is bijective for each nonnegative integer $k\leq \frac{s}{2}$. 
We call $\ell \in A_{1}$ with this property a \emph{strong Lefschetz element}. 
If $A$ has the strong Lefschetz property, then the Hilbert series of $A$ is palindromic. 

We say that $A$ satisfies the \emph{Hodge--Riemann relation} with respect to $\ell\in A_{1}$
if the symmetric bilinear form
\begin{align}\label{hrr}
Q^{k}_{\ell}\colon A_{k}\times A_{k}\to\RR, && (f,g)\mapsto (-1)^{k}P^{k}(f, \ell^{s-2k}g)
\end{align}
is positive definite on the kernel $\times \ell^{s-2k+1}\colon A_{k}\to A_{s-k+1}$ for each nonnegative integer $k\leq \frac{s}{2}$. 

The strong Lefschetz property and Hodge--Riemann relation are defined on a general graded algebra with a symmetric bilinear form, but we consider those properties on a graded Artinian Gorenstin algebra with the Poincar\'e duality in this paper. 

Let $\der_{i}=\frac{\der}{\der x_{i}}$ be the partial derivative operator of $x_{i}$. 
The polynomial ring $\RR[\der_{1}, \der_{2}, \ldots, \der_{n}]$ acts on $\RR[x_{1}, x_{2}, \ldots, x_{n}]$ in the usual manner. 
For a homogeneous polynomial $F\in \RR[x_{1}, x_{2}, \ldots, x_{n}]$, we define the annihilator $\Ann(F)$ by
\begin{align*}
\Ann(F)=\Set{f\in \RR[\der_{1}, \ldots, \der_{n}]| fF=0}. 
\end{align*}
Then $\Ann(F)$ is a homogeneous ideal of $\RR[\der_{1}, \ldots, \der_{n}]$. 
Let $A=\RR[\der_{1}, \ldots, \der_{n}]/\Ann(F)$. 
Since $\Ann(F)$ is homogeneous, the algebra $A$ is graded. 
Furthermore $A$ is an Artinian Gorenstein algebra. 
Conversely, a graded Artinian Gorenstein algebra $A$ has the presentation
\begin{align*}
A=\RR[\der_{1}, \ldots, \der_{n}]/\Ann(F)
\end{align*}
for some homogeneous polynomial $F\in \RR[x_{1}, x_{2}, \ldots, x_{n}]$. 
The socle degree $s$ of $A$ is the degree of $F$. 
Thus the maps
\begin{align*}
P_{F}^{k}: A_{k}\times A_{s-k}\to \RR, && (f,g)\mapsto fgF
\end{align*}
are bilinear maps. 
We call $P_{F}=\bigoplus_{k}P_{F}^{k}$ the \emph{Poincar\'e duality} of $A$. 
The Hilbert series is palindromic for every graded Artinian Gorenstein algebra. 

Let $A=\RR[\der_{1}, \ldots, \der_{n}]/\Ann(F)=\bigoplus_{k=0}^{s}A_{k}$ be a graded Artinian Gorenstein algebra, 
and $\Lambda_{k}$ the basis for $A_{k}$. 
We define the matrix $H_{F}^{k}$ by 
\begin{align*}
H_{F}^{k}=
\left(
e_{i}
e_{j}
F
\right)_{e_{i},e_{j}\in \Lambda_{k}}. 
\end{align*}
The matrix $H_{F}^{k}$ is called the \emph{$k$-th Hessian matrix} and $\det H_{F}^{k}$ is called the $k$th \emph{Hessian} of $F$ with respect to the basis $\Lambda_{k}$. 
We define the $0$th Hessian of $F$ to be $F$. 
If $\Lambda_{1}=\Set{\der_{1}, \der_{2}, \ldots, \der_{n}}$, then $H_{F}^{1}$ coincides with the usual Hessian matrix of $F$.

\begin{Theorem}[Watanabe \cite{W2000}, Maeno--Watanabe \cite{MR2594646}]\label{SLP and Hess}
Let $\bm{a}=(a_{1}, a_{2}, \ldots, a_{n})\in\RR^{n}$, and $\ell_{\bm{a}}=a_{1}\der_{1}+a_{2}\der_{2}+\cdots+a_{n}\der_{n}$. 
The multiplication map 
$\times \ell_{\bm{a}}^{s-2k}\colon A_{k}\to A_{s-k}$ is bijective 
if and only if $\det H_{F}^{k}(\bm{a})\neq 0$. 
\end{Theorem}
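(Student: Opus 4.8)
The plan is to reduce the bijectivity of $\times\ell_{\bm a}^{s-2k}$ to the nondegeneracy of an associated bilinear form on $A_{k}$, and then to identify that form, up to a nonzero scalar, with the evaluated Hessian matrix $H_{F}^{k}(\bm a)$. First I would use the defining property of a Gorenstein algebra: the Poincar\'e pairing $P_{F}^{k}\colon A_{k}\times A_{s-k}\to\RR$ is a perfect pairing, and in particular $\dim A_{k}=\dim A_{s-k}$. Hence $L\colon A_{k}\to A_{s-k}$, $L(g)=\ell_{\bm a}^{s-2k}g$, is a linear map between spaces of equal dimension, so it is bijective if and only if it is injective. Composing $L$ with the pairing produces the symmetric bilinear form
\[
Q_{\bm a}(f,g)=P_{F}^{k}\!\left(f,\ell_{\bm a}^{s-2k}g\right)=f\,\ell_{\bm a}^{s-2k}g\,F
\]
on $A_{k}\times A_{k}$. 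Because $P_{F}^{k}$ is perfect it induces an isomorphism $\phi\colon A_{s-k}\to A_{k}^{*}$, and under it $Q_{\bm a}$ corresponds to $\phi\circ L$; thus $Q_{\bm a}$ is nondegenerate exactly when $L$ is an isomorphism. This gives the first equivalence: $\times\ell_{\bm a}^{s-2k}$ is bijective if and only if $Q_{\bm a}$ is nondegenerate.

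Next I would write $Q_{\bm a}$ in the basis $\Lambda_{k}=\{e_{1},\ldots,e_{m}\}$ and compare it with $H_{F}^{k}(\bm a)$. The $(i,j)$ entry of the matrix of $Q_{\bm a}$ is the real number $e_{i}\,\ell_{\bm a}^{s-2k}e_{j}\,F$, which is well defined since the total operator has degree $s=\deg F$. The key computational identity is that for any homogeneous polynomial $G$ of degree $d$ one has $\ell_{\bm a}^{d}G=d!\,G(\bm a)$, a direct consequence of the multinomial expansion of $\ell_{\bm a}^{d}=(\sum_{i}a_{i}\der_{i})^{d}$ together with the Taylor expansion of $G$ at the origin. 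Applying this to $G=e_{i}e_{j}F$, which is homogeneous of degree $d=s-2k$, and using that the operators $e_{i}$, $e_{j}$, and $\ell_{\bm a}^{s-2k}$ commute, I obtain
\[
\left(e_{i}e_{j}F\right)(\bm a)=\frac{1}{(s-2k)!}\,\ell_{\bm a}^{s-2k}e_{i}e_{j}F=\frac{1}{(s-2k)!}\,e_{i}\,\ell_{\bm a}^{s-2k}e_{j}\,F.
\]
Therefore $H_{F}^{k}(\bm a)=\frac{1}{(s-2k)!}\bigl(\text{matrix of }Q_{\bm a}\bigr)$, and since the scalar $(s-2k)!$ is nonzero the two matrices have the same rank and their determinants vanish simultaneously.

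Combining the two steps yields the claim: $\times\ell_{\bm a}^{s-2k}$ is bijective $\iff$ $Q_{\bm a}$ is nondegenerate $\iff$ $\det H_{F}^{k}(\bm a)\neq 0$. I expect the main obstacle to be the careful bookkeeping in the evaluation-versus-operator identity $\ell_{\bm a}^{d}G=d!\,G(\bm a)$ and the correct tracking of the factor $(s-2k)!$, since this is where the passage from the \emph{polynomial} entries $e_{i}e_{j}F$ to the \emph{scalar} pairing values takes place. By contrast, the perfectness of $P_{F}^{k}$, although it is the conceptual heart of the argument, is a standard consequence of the Gorenstein hypothesis and requires no separate work here.
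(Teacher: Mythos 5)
Your proof is correct. Note that the paper itself gives no proof of this statement: it is quoted as a known result of Watanabe and Maeno--Watanabe, so there is no internal argument to compare against. What you wrote is essentially the standard proof from those references: perfectness of the Poincar\'e pairing $P_{F}^{k}$ reduces bijectivity of $\times\ell_{\bm a}^{s-2k}$ to nondegeneracy of the form $Q_{\bm a}(f,g)=f\,\ell_{\bm a}^{s-2k}g\,F$, and the evaluation identity $\ell_{\bm a}^{d}G=d!\,G(\bm a)$ for homogeneous $G$ of degree $d$ identifies the matrix of $Q_{\bm a}$ with $(s-2k)!\,H_{F}^{k}(\bm a)$. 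Your key computation is exactly the one the paper does use in its proof of Proposition \ref{HRR and Hess}, where $Q_{\ell_{\bm a}}^{1}(e_{i},e_{j})=-(s-2)!\bigl(H_{F}^{1}(\bm a)\bigr)_{i,j}$ is derived by the same expansion, so your argument is consistent with, and fills in, the mechanism the paper takes for granted.
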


\begin{Remark}
By Theorem \ref{SLP and Hess}, a strong Lefschetz element comes from an open dense space where the determinants do not vanish. 
Thus, if the $k$-th Hessian does not vanish as a polynomial for all $k$, then the Artinian Gorenstein algebra $A$ has the strong Lefschetz property.  
\end{Remark}


\begin{Proposition}\label{HRR and Hess}
Assume that $F(\bm{a})>0$ for $\bm{a}=(a_{1}, a_{2}, \ldots, a_{n})\in \RR^{n}$. 
Let $\ell_{\bm{a}}=a_{1}\der_{1}+a_{2}\der_{2}+\cdots+a_{n}\der_{n}$. 
The algebra $A$ satisfies the condition (\ref{hrr}) when $k=1$ with respect to $\ell_{\bm{a}}$
if and only if 
the first Hessian matrix $H_{F}^{1}(\bm{a})$ has $n-1$ negative eigenvalues and one positive eigenvalue. 
\end{Proposition}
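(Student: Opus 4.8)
The plan is to translate the Hodge--Riemann condition at $k=1$ into a statement about the Hessian matrix via two observations: the form $Q^1_{\ell_{\bm a}}$ is, up to a positive scalar, minus $H_F^1(\bm a)$, and the subspace on which definiteness is tested is the orthogonal complement of $\nabla F(\bm a)$. First I would record the elementary identity that for any homogeneous polynomial $G$ of degree $d$ one has $\ell_{\bm a}^{\,d}G = d!\,G(\bm a)$, which follows from the multinomial expansion of $\ell_{\bm a}^{\,d}=(\sum_k a_k\der_k)^{d}$ together with $\der^{\beta}\bm x^{\alpha}=\alpha!\,\delta_{\alpha\beta}$ for $|\alpha|=|\beta|=d$. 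Applying this to $G=\der_i\der_j F$ (homogeneous of degree $s-2$) and using that the differential operators commute gives
\[
Q^1_{\ell_{\bm a}}(\der_i,\der_j)=-\,\der_i\der_j\,\ell_{\bm a}^{\,s-2}F=-(s-2)!\,(\der_i\der_j F)(\bm a).
\]
Hence the Gram matrix of $Q^1_{\ell_{\bm a}}$ in the basis $\der_1,\dots,\der_n$ of $A_1$ is exactly $-(s-2)!\,H_F^1(\bm a)$. (Here $s\ge 2$ is forced by the hypothesis that $k=1\le s/2$, so $(s-2)!>0$.)

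Next I would identify the kernel $W$ of $\times\ell_{\bm a}^{\,s-1}\colon A_1\to A_s$. Since $A_s$ is one-dimensional and $h\mapsto hF$ identifies it with $\RR$, this map is the functional sending $\sum_i c_i\der_i$ to $\ell_{\bm a}^{\,s-1}\bigl(\sum_i c_i\der_i\bigr)F=\sum_i c_i\,\ell_{\bm a}^{\,s-1}(\der_i F)=(s-1)!\sum_i c_i\,(\der_i F)(\bm a)$, again by the identity above. Thus $W$ is the hyperplane $\{\bm c : \bm c\cdot\nabla F(\bm a)=0\}$. By Euler's identity $\bm a\cdot\nabla F(\bm a)=sF(\bm a)>0$, so $\nabla F(\bm a)\neq\zero$ and $\dim W=n-1$. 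Since $(s-2)!>0$, the relation (\ref{hrr}) at $k=1$ holds with respect to $\ell_{\bm a}$ if and only if $H_F^1(\bm a)$ is negative definite on $W=\nabla F(\bm a)^{\perp}$.

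The hard part — really the only idea beyond routine bookkeeping — is to convert ``negative definite on the hyperplane $\nabla F(\bm a)^{\perp}$'' into the global inertia count. For this I would differentiate Euler's identity $\sum_j x_j\der_j F=sF$ to obtain $H_F^1(\bm a)\,\bm a=(s-1)\nabla F(\bm a)$. Then for $\bm w\in W$,
\[
\bm w^{\top}H_F^1(\bm a)\,\bm a=(s-1)\,\bm w\cdot\nabla F(\bm a)=0,
\]
so $\bm a$ is $H_F^1(\bm a)$-orthogonal to $W$; moreover $\bm a^{\top}H_F^1(\bm a)\,\bm a=(s-1)\,\bm a\cdot\nabla F(\bm a)=s(s-1)F(\bm a)>0$ and $\bm a\notin W$. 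Therefore $\RR^{n}=W\oplus\RR\bm a$ is an $H_F^1(\bm a)$-orthogonal decomposition, and by Sylvester's law of inertia the inertia of $H_F^1(\bm a)$ is the sum of the inertia of its restriction to $W$ and the one positive contribution coming from $\bm a$. Consequently $H_F^1(\bm a)$ is negative definite on $W$ if and only if it has $n-1$ negative eigenvalues and exactly one positive eigenvalue, which is the desired equivalence.

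In carrying this out the two points to keep honest are that $\der_1,\dots,\der_n$ form a basis of $A_1$ so that $H_F^1(\bm a)$ genuinely represents $Q^1_{\ell_{\bm a}}$ (this is the case $\dim A_1=n$ relevant here), and that the positivity $F(\bm a)>0$ is used twice: once to guarantee $\nabla F(\bm a)\neq\zero$ (so $\dim W=n-1$) and once to place $\bm a$ in the positive part of the form. I do not expect either of these, nor the multinomial identity, to present genuine difficulty; the substantive step is the Euler relation $H_F^1(\bm a)\bm a=(s-1)\nabla F(\bm a)$, which makes $\bm a$ orthogonal to the test hyperplane and splits the quadratic form cleanly.
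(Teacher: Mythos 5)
Your proposal is correct and follows essentially the same route as the paper: both identify the Gram matrix of $Q^{1}_{\ell_{\bm{a}}}$ in the basis $\{\der_{1},\ldots,\der_{n}\}$ as $-(s-2)!\,H_{F}^{1}(\bm{a})$, split $A_{1}$ into the line spanned by the Lefschetz element plus the kernel hyperplane, and count inertia via Sylvester's law. The only cosmetic difference is that you establish the splitting and its orthogonality concretely through Euler's identity ($W=\nabla F(\bm{a})^{\perp}$ and $H_{F}^{1}(\bm{a})\bm{a}=(s-1)\nabla F(\bm{a})$), whereas the paper obtains the same decomposition $A_{1}=\RR\ell_{\bm{a}}\oplus\Ker(\times\ell_{\bm{a}}^{s-1})$ directly from the bijectivity of $\times\ell_{\bm{a}}^{s}\colon A_{0}\to A_{s}$ together with the computation $Q^{1}_{\ell_{\bm{a}}}(\ell_{\bm{a}},\ell_{\bm{a}})=-s!\,F(\bm{a})<0$.
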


\begin{proof}
Since the map 
\begin{align*}
\times \ell_{\bm{a}}^{s}:A_0 \xrightarrow{\times \ell_{\bm{a}}} A_1 \xrightarrow{\times \ell_{\bm{a}}^{s-1}} A_s
\end{align*}
is bijective, we have $A_1=\RR \ell_{\bm{a}}\oplus \Ker(\times \ell_{\bm{a}}^{s-1})$. 
And we have 
\begin{align*}
Q_{\ell_{\bm{a}}}^{1}(\ell_{\bm{a}}, \ell_{\bm{a}})&=-P_{F}^{1}(\ell_{\bm{a}}, \ell_{\bm{a}}^{s-2}\ell_{\bm{a}}) \\
&=-P_{F}^{1}(\ell_{\bm{a}}, \ell_{\bm{a}}^{s-1}) \\
&=-\ell_{\bm{a}}^{s}F \\
&=-s!F(\bm{a}) <0. 
\end{align*}
Hence, 
the algebra $A$ satisfies the condition (\ref{hrr}) when $k=1$ with respect to $\ell_{\bm{a}}$ 
if and only if 
$Q_{\ell_{\bm{a}}}^{1}$ has $n-1$ positive eigenvalues and one negative eigenvalue. 

Furthermore, the representing matrix associated to $Q_{\ell_{\bm{a}}}^{1}$ with respect to a basis $\Lambda_{1}$ for $A_1$ is given by the first Hessian matrix $-H_{F}^{1}(\bm{a})$. 
In fact, for $e_{i}, e_{j}\in \Lambda_{1}$, 
\begin{align*}
Q_{\ell_{\bm{a}}}^{1}(e_{i}, e_{j})&=-P_{F}^{1}(e_{i}, \ell_{\bm{a}}^{s-2}e_{j}) \\
&=-\ell_{\bm{a}}^{s-2}e_{i}e_{j}F \\
&=-(a_{1}\der_{1}+a_{2}\der_{2}+\cdots+a_{n}\der_{n})^{s-2}e_{i}e_{j}F \\
&=-(s-2)!(e_{i}e_{j}F)(\bm{a}) \\
&=-(s-2)!\left(H_{F}^{1}(\bm{a})\right)_{i,j}. 
\end{align*}
\end{proof}

\section{Main results}\label{Main results}

In this section, we discuss the strong Lefschetz property and Hodge--Riemann relation for the Artinian Gorenstein algebras defined by the regular polyhedra.

For a polyhedron $\PPP$, we call a $0$-dimensional face a \emph{vertex} of $\PPP$, an $1$-dimensional face an \emph{edge} of $\PPP$, and a $2$-dimensional face, simply, a \emph{face} of $\PPP$. 
Let $V(\PPP)$, $E(\PPP)$ and $F(\PPP)$ denote the collection of vertices, edges and faces of $\PPP$, respectively. 
We focus a combinatorial data, the face poset of $\PPP$. 
The face poset of $\PPP$ is the set $V(\PPP)\cup E(\PPP)\cup F(\PPP)$ ordered by inclusion. 
We regard edges and faces of $\PPP$ as subsets of $V(\PPP)$.   

In this paper, we are only interested in the regular polyhedra. 
For a regular polyhedra $\PPP$, we define a homogeneous polynomial by
\begin{align*}
F_{\PPP}=\sum_{F\in F(\PPP)}\prod_{v\in F}x_{v}. 
\end{align*}
The number of vertices appears as the number of variables, and the number of faces appears as the numbers of terms.  
The shape of faces appears as the degree. 
For example, if faces of $\PPP$ are $d$-gons, then $F_{\PPP}$ is a homogeneous of degree $d$. 

For a polyhedron $\PPP$, $A_{\PPP}=\bigoplus_{k=0}^{s}A_{k}$ denotes the Artinian Gorenstein algebra defined by the homogeneous polynomial $F_{\PPP}$. 
We call $A_{\PPP}$ the algebra associated to $\PPP$. 
For the algebra $A_{\PPP}$, we consider the Poincar\'e duality $P_{F_{\PPP}}=\bigoplus_{k}P_{F_{\PPP}}^{k}$, where 
\begin{align*}
P_{F_{\PPP}}^{k}: A_{k}\times A_{s-k}\to \RR, && (f,g)\mapsto fgF_{\PPP}. 
\end{align*}
We see the strong Lefschetz property of $A_{\PPP}$ and Hodge--Riemann relation with respect to the Poincar\'e duality  $P_{F_{\PPP}}=\bigoplus_{k}P^{k}_{F_{\PPP}}$ of $A_{\PPP}$. 

\begin{Remark}
As mentioned in Section \ref{SLP}, if $\Set{\der_{i}}_{i\in V(\PPP)}$ is a basis for $A_{1}$, then the first Hessian matrix $H^{1}_{F_{\PPP}}$ coincides with the usual Hessian matrix of $F_{\PPP}$. 
Unless noted, we calculate $H^{1}_{F_{\PPP}}$ as the Hessian matrix of $F_{\PPP}$. 
In other words, we calculate $H^{1}_{F_{\PPP}}$ with respect to $\Set{\der_{i}}_{i\in V(\PPP)}\subset A_{1}$.   
\end{Remark}

\subsection{Regular tetrahedron}\label{n=4}

Let us consider the regular tetrahedron $\PPP$. 
The regular tetrahedron has $4$ vertices, $6$ edges and $4$ faces. 
We assign the number $1,2,3,4$ to the vertices. 
Let $$F(\PPP)=\Set{\Set{1,2,3}, \Set{1,2,4}, \Set{1,3,4}, \Set{2,3,4}}. $$
See Figure \ref{4}. 
\begin{figure}
\centering
\begin{minipage}{0.3\textwidth}
\includegraphics[width=0.98\textwidth]{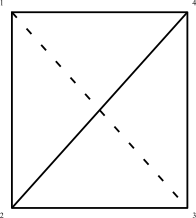}
\end{minipage}
\caption{Tetrahedron}\label{4}
\end{figure}
Then 
\begin{align*}
F_{\PPP}=x_{1}x_{2}x_{3}+x_{1}x_{2}x_{4}+x_{1}x_{3}x_{4}+x_{2}x_{3}x_{4}. 
\end{align*}
The Hilbert series of the algebra associated to $\PPP$ is $(1,4,4,1)$. 

The homogeneous polynomial $F_{\PPP}$ is equal to the elementary symmetric polynomial $e_{3}(x_{1}, x_{2}, x_{3}, x_{4})$ of degree $3$ in $4$ variables. 
In \cite{MR3566530}, Maeno and Numata show that the algebra defined by the annihilator of the elementary symmetric polynomial $e_{d}(x_{1}, x_{2}, \ldots, x_{n})$ of degree $d$ in $n$ variables satisfies the strong Lefschetz property for every $d$ and $n$. 

Furthermore, $F_{\PPP}$ is equal to the Kirchhoff polynomial of the cycle graph $C_{4}$ with $4$ vertices.  
In \cite{NY2019}, Nagaoka and Yazawa show that the algebra defined by a Kirchhoff polynomial, a homogeneous polynomial defined by a graph, satisfies the strong Lefschetz property and Hodge--Riemann relation ``at degree one'' (it will be explained in Remark \ref{rem deg1}) on the positive orthant. 
More generally, in \cite{MNY2020}, Murai, Nagaoka and Yazawa show that the algebra defined by the basis generating polynomial, a generalization of a Kirchhoff polynomial, satisfies the strong Lefschetz property and Hodge--Riemann relation at degree one on the positive orthant. 

\begin{Remark}\label{rem deg1}
Let $A=\bigoplus_{k=0}^{s} A_{k}$ be a graded algebra with a symmetric bilinear map $P^{k}$ from $A_{k}\times A_{s-k}$ to $\RR$. 
We say that $A$ satisfies the strong Lefschetz property at degree one if the condition (\ref{slp}) holds when $k=1$, and that $A$ satisfies the Hodge--Riemann relation with respect to $\ell\in A_1$ if the condition (\ref{hrr}) holds when $k=1$. 
\end{Remark}

To summarize of this section, we obtain the following. 

\begin{Theorem}\label{n=4}
The algebra associated to the regular tetrahedron satisfies the strong Lefschetz property and Hodge--Riemann relation on the positive orthant. 
\end{Theorem}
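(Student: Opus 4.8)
The plan is to reduce the statement to the degree-one case, which is exactly what the cited results establish. Since the Hilbert series is $(1,4,4,1)$ the socle degree is $s=3$, so $\tfrac{s}{2}=\tfrac{3}{2}$ and the only nonnegative integers $k\le\tfrac{s}{2}$ are $k=0$ and $k=1$. Hence both the strong Lefschetz property and the Hodge--Riemann relation reduce to the cases $k=0$ (which is essentially trivial) and $k=1$ (the degree-one case in the sense of Remark \ref{rem deg1}). I would structure the proof around this dichotomy.

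First I would dispose of the strong Lefschetz property. For $k=0$ the map $\times\ell_{\bm{a}}^{3}\colon A_{0}\to A_{3}$ is bijective if and only if $\ell_{\bm{a}}^{3}F_{\PPP}=3!\,F_{\PPP}(\bm{a})\neq0$, which holds on the positive orthant since there $F_{\PPP}(\bm{a})>0$. For $k=1$ I would use the identification $F_{\PPP}=e_{3}(x_{1},x_{2},x_{3},x_{4})$: by Maeno--Numata \cite{MR3566530} the algebra of the elementary symmetric polynomial has the strong Lefschetz property, giving bijectivity of $\times\ell^{s-2}\colon A_{1}\to A_{2}$ for generic $\ell$; equivalently, since $F_{\PPP}$ is the Kirchhoff polynomial of $C_{4}$, Nagaoka--Yazawa \cite{NY2019} (or Murai--Nagaoka--Yazawa \cite{MNY2020}) yield the degree-one strong Lefschetz property on the positive orthant. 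Because $s=3$, the degree-one property together with the trivial $k=0$ case \emph{is} the full strong Lefschetz property.

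Next I would treat the Hodge--Riemann relation. For $k=0$ the kernel of $\times\ell_{\bm{a}}^{s+1}\colon A_{0}\to A_{s+1}=0$ is all of $A_{0}$, and $Q^{0}_{\ell_{\bm{a}}}(1,1)=\ell_{\bm{a}}^{s}F_{\PPP}=s!\,F_{\PPP}(\bm{a})>0$ on the positive orthant, so the form is positive definite there. For $k=1$ I would apply Proposition \ref{HRR and Hess}: the relation holds with respect to $\ell_{\bm{a}}$ if and only if the first Hessian matrix $H^{1}_{F_{\PPP}}(\bm{a})$ has one positive and $n-1=3$ negative eigenvalues. This signature statement is precisely strict log-concavity of $F_{\PPP}$ together with nondegeneracy of its Hessian, both supplied on the positive orthant by \cite{NY2019} and \cite{MNY2020}. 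Again, since $s=3$, the cases $k=0,1$ exhaust the Hodge--Riemann relation.

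The main obstacle, were one to seek a self-contained argument rather than cite the literature, is the $k=1$ signature. One must show that the explicit matrix $H^{1}_{F_{\PPP}}(\bm{a})$, whose off-diagonal $(i,j)$ entry is $\sum_{k\neq i,j}a_{k}$ and whose diagonal vanishes (as $e_{3}$ is multilinear), has exactly one positive and no zero eigenvalue on the positive orthant. Nonvanishing of $\det H^{1}_{F_{\PPP}}(\bm{a})$ delivers the strong Lefschetz half via Theorem \ref{SLP and Hess}, while controlling the signs of the remaining eigenvalues delivers the Hodge--Riemann half via Proposition \ref{HRR and Hess}. The full symmetry of $e_{3}$ makes this eigenvalue analysis tractable, but it is the only genuinely computational point.
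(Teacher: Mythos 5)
Your proposal is correct and follows essentially the same route as the paper: identify $F_{\PPP}$ with $e_{3}(x_{1},x_{2},x_{3},x_{4})$ (equivalently the Kirchhoff polynomial of $C_{4}$), cite Maeno--Numata \cite{MR3566530} for the strong Lefschetz property and Nagaoka--Yazawa \cite{NY2019} / Murai--Nagaoka--Yazawa \cite{MNY2020} for the degree-one Hodge--Riemann relation on the positive orthant, and observe that since $s=3$ the cases $k=0,1$ exhaust both properties. You are in fact slightly more careful than the paper, which leaves the trivial $k=0$ verification and the ``degree one suffices'' reduction implicit.
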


\subsection{Regular hexahedron}\label{n=6}

Let us consider the regular hexahedron $\PPP$. 
The regular hexahedron has $8$ vertices, $12$ edges and $6$ faces. 
We assign the number $1,2,\ldots, 8$ to the vertices. 
Let $$F(\PPP)=\Set{\Set{1,2,3,4}, \Set{2,3,6,7}, \Set{3,4,7,8}, \Set{1,4,5,8}, \Set{1,2,5,6}, \Set{5,6,7,8}}. $$ 
See Figure \ref{6}. 
\begin{figure}
\centering
\begin{minipage}{0.3\textwidth}
\includegraphics[width=0.98\textwidth]{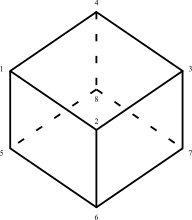}
\end{minipage}
\caption{Hexahedron}\label{6}
\end{figure}
Then 
\begin{align*}
F_{\PPP}=x_{1}x_{2}x_{3}x_{4}+x_{2}x_{3}x_{6}x_{7}+x_{3}x_{4}x_{7}x_{8}+x_{1}x_{4}x_{5}x_{8}+x_{1}x_{2}x_{5}x_{6}+x_{5}x_{6}x_{7}x_{8}. 
\end{align*}

\begin{Proposition}\label{eigen n=6}
The eigenvalues of the matrix $H_{F_\PPP}^{1}(1,1,\ldots, 1)$ are $9, 1, -3$ with the dimensions of the eigenspaces $1,3,4$, respectively.  
\end{Proposition}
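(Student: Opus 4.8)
The plan is to write down the $8\times 8$ matrix $H_{F_\PPP}^{1}(1,1,\ldots,1)$ explicitly and then diagonalize it by exploiting the symmetry of the cube. Since every variable occurs to the first power in each monomial of $F_\PPP$, we have $\der_{i}^{2}F_\PPP=0$, so all diagonal entries vanish; and for $i\neq j$ the entry $(\der_{i}\der_{j}F_\PPP)(1,\ldots,1)$ is simply the number of faces $F\in F(\PPP)$ containing both vertices $i$ and $j$. First I would record, directly from the face list above, that two distinct vertices of the cube lie in exactly two common faces when they span an edge, in exactly one common face when they are the diagonal of a single square face, and in no common face when they are antipodal. Thus $M:=H_{F_\PPP}^{1}(1,\ldots,1)=2A+B$, where $A$ is the adjacency matrix of the cube graph and $B$ records the face-diagonal pairs.

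The key step is to recognize that $M$ is invariant under the translation action of $(\ZZ/2)^{3}$, i.e.\ it lies in the Bose--Mesner algebra of the $3$-cube graph. Identify $V(\PPP)$ with $(\ZZ/2)^{3}$ by the binary coordinatization $1\mapsto(0,0,0)$, $2\mapsto(1,0,0)$, $3\mapsto(1,1,0)$, $4\mapsto(0,1,0)$, $5\mapsto(0,0,1)$, $6\mapsto(1,0,1)$, $7\mapsto(1,1,1)$, $8\mapsto(0,1,1)$, under which one checks that the six faces become the six coordinate sections $\{x_{k}=0\}$, $\{x_{k}=1\}$. Then an edge, a face diagonal, and an antipodal pair are exactly the pairs at Hamming distance $1$, $2$, $3$, so the $(u,v)$ entry of $M$ depends only on $d(u,v)$, with values $c_{0}=0$, $c_{1}=2$, $c_{2}=1$, $c_{3}=0$. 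Writing $X=\bigl(\begin{smallmatrix}0&1\\1&0\end{smallmatrix}\bigr)$ for the swap on $\RR^{2}$, this means
\begin{align*}
M=2\bigl(X\otimes I\otimes I+I\otimes X\otimes I+I\otimes I\otimes X\bigr)+\bigl(X\otimes X\otimes I+X\otimes I\otimes X+I\otimes X\otimes X\bigr).
\end{align*}

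I would then diagonalize $M$ through the simultaneous eigenbasis of the tensor factors, namely the vectors $v_{\epsilon_{1}}\otimes v_{\epsilon_{2}}\otimes v_{\epsilon_{3}}$ with $v_{+}=(1,1)$, $v_{-}=(1,-1)$ and $\epsilon_{i}\in\{+1,-1\}$, each an eigenvector of every $X$-factor with eigenvalue $\epsilon_{i}$. Grouping by the number $w$ of indices with $\epsilon_{i}=-1$, the eigenvalue of $M$ equals $2(3-2w)$ plus the sum $\epsilon_{1}\epsilon_{2}+\epsilon_{1}\epsilon_{3}+\epsilon_{2}\epsilon_{3}$, which evaluates to $9,1,-3,-3$ for $w=0,1,2,3$ with multiplicities $\binom{3}{w}=1,3,3,1$. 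Because the values at $w=2$ and $w=3$ coincide, the eigenvalue $-3$ has total multiplicity $4$, giving the spectrum $9,1,-3$ with multiplicities $1,3,4$, as claimed. The only genuine work is the combinatorial bookkeeping of how many faces each vertex pair shares; once $M$ is seen to be a function of Hamming distance on $(\ZZ/2)^{3}$, the diagonalization is automatic, so I expect the sole obstacle to be verifying the coordinatization correctly, which is routine.
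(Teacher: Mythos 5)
Your proof is correct, and it reaches the spectrum by a genuinely different route than the paper. The paper writes out the $8\times 8$ matrix in the given vertex labelling, observes that it is a symmetric $2\times 2$ array of $4\times 4$ circulant blocks (indexed by the top face $\{1,2,3,4\}$ and the bottom face $\{5,6,7,8\}$), and invokes \cite[Section 2]{Y2018} --- a block-circulant reduction coming from the $\ZZ/4$ rotational symmetry --- to reduce the eigenvalue problem to four explicit $2\times 2$ matrices $M_0=\bigl(\begin{smallmatrix}5&4\\4&5\end{smallmatrix}\bigr)$, $M_1=M_3=\bigl(\begin{smallmatrix}-1&2\\2&-1\end{smallmatrix}\bigr)$, $M_2=\bigl(\begin{smallmatrix}-3&0\\0&-3\end{smallmatrix}\bigr)$, whose spectra $\{9,1\}$, $\{1,-3\}$, $\{-3,-3\}$, $\{1,-3\}$ assemble into the claimed multiplicities $1,3,4$. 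You instead exploit the larger abelian symmetry $(\ZZ/2)^3$: after the same combinatorial bookkeeping (entries $2$, $1$, $0$ for edges, face diagonals, antipodal pairs, which indeed matches the paper's printed matrix), you identify the matrix as a function of Hamming distance, hence an element of the Bose--Mesner algebra of the $3$-cube, write it as a polynomial in tensor products of swaps, and diagonalize it completely in the character basis $v_{\epsilon_1}\otimes v_{\epsilon_2}\otimes v_{\epsilon_3}$, with eigenvalue $2(\epsilon_1+\epsilon_2+\epsilon_3)+(\epsilon_1\epsilon_2+\epsilon_1\epsilon_3+\epsilon_2\epsilon_3)$; your values $9,1,-3,-3$ at $w=0,1,2,3$ with multiplicities $1,3,3,1$ are correct, and the coincidence at $-3$ gives multiplicity $4$. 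Your route buys self-containedness (no external lemma), a reduction all the way to scalars rather than to $2\times 2$ blocks, and an explicit orthogonal eigenbasis; the paper's route needs only the visible circulant pattern and two circulant symbols, and --- importantly for the paper --- the same cited technique is reused for the icosahedron, where the relevant symmetry is $\ZZ/5$ and no elementary abelian $2$-group structure of your kind is available.
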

The matrix $H_{F_\PPP}^{1}(1,1,\ldots, 1)$ is in Section \ref{Hessian matrices}. 
Also the proof of Proposition \ref{eigen n=6} is in Section \ref{Hessian matrices}. 

From Proposition \ref{eigen n=6}, Theorem \ref{SLP and Hess} and Proposition \ref{HRR and Hess}, we obtain the following. 

\begin{Theorem}\label{n=6}
The algebra associated to the regular hexahedron satisfies the strong Lefschetz property with a strong Lefschetz element $\sum_{i=1}^{8}\der_{i}$, 
but does not satisfy the Hodge--Riemann relation with respect to $\sum_{i=1}^{8}\der_{i}$. 
\end{Theorem}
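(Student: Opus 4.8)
The plan is to derive Theorem~\ref{n=6} as a direct consequence of the eigenvalue data in Proposition~\ref{eigen n=6} together with the two structural results recalled earlier, namely Theorem~\ref{SLP and Hess} and Proposition~\ref{HRR and Hess}. The socle degree here is $s=4$ and the Hilbert series is $(1,6,6,1)$, so the only nontrivial instances of the multiplication maps \eqref{slp} occur at $k=0$ and $k=1$; the map at $k=1$ is $\times\ell^{2}\colon A_{1}\to A_{3}$. Thus establishing the strong Lefschetz property reduces to checking that both the $0$th and $1$st Hessians are nonzero at the candidate element $\ell=\sum_{i=1}^{8}\der_{i}$, i.e. at $\bm{a}=(1,1,\ldots,1)$.

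First I would verify the strong Lefschetz property. By Proposition~\ref{eigen n=6}, the matrix $H_{F_{\PPP}}^{1}(1,\ldots,1)$ has eigenvalues $9,1,-3$ with multiplicities $1,3,4$, so $\det H_{F_{\PPP}}^{1}(1,\ldots,1)=9\cdot 1^{3}\cdot(-3)^{4}\neq 0$. Theorem~\ref{SLP and Hess} then gives that $\times\ell^{s-2}=\times\ell^{2}\colon A_{1}\to A_{3}$ is bijective. The case $k=0$ is automatic since $F_{\PPP}(1,\ldots,1)=6\neq 0$ makes the $0$th Hessian nonvanishing, so $\times\ell^{s}\colon A_{0}\to A_{4}$ is bijective as well. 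Since both relevant multiplication maps are bijective, $\ell=\sum_{i=1}^{8}\der_{i}$ is a strong Lefschetz element and $A_{\PPP}$ has the strong Lefschetz property.

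Next I would refute the Hodge--Riemann relation with respect to the same $\ell$. By Proposition~\ref{HRR and Hess}, since $F_{\PPP}(1,\ldots,1)=6>0$, the algebra satisfies the condition~\eqref{hrr} at $k=1$ with respect to $\ell$ if and only if $H_{F_{\PPP}}^{1}(1,\ldots,1)$ has exactly $n-1=7$ negative eigenvalues and exactly one positive eigenvalue. But the eigenvalue data show one eigenvalue equal to $9$, three equal to $1$, and four equal to $-3$, i.e. \emph{four} positive eigenvalues and four negative eigenvalues. This signature violates the required $(1,7)$ pattern, so the Hodge--Riemann relation fails at $k=1$ for $\ell$, and a fortiori $A_{\PPP}$ does not satisfy the Hodge--Riemann relation with respect to $\sum_{i=1}^{8}\der_{i}$.

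The genuine work is entirely contained in Proposition~\ref{eigen n=6}, whose proof is deferred to Section~\ref{Hessian matrices}; once the eigenvalue multiplicities are in hand, Theorem~\ref{n=6} follows by the purely formal bookkeeping above. The main obstacle is therefore not in this theorem but in the spectral computation for the $8\times 8$ Hessian: one must exploit the symmetry of the cube, recognizing $H_{F_{\PPP}}^{1}(1,\ldots,1)$ as built from the combinatorial incidence structure of the facets (each diagonal entry vanishing and each off-diagonal entry recording whether two vertices share a common face), and diagonalize it by decomposing $\RR^{8}$ into isotypic components under the automorphism group action to read off the eigenvalues $9,1,-3$ and their multiplicities $1,3,4$.
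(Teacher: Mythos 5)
Your proof is correct and follows exactly the paper's route: the theorem is deduced formally from Proposition~\ref{eigen n=6} via Theorem~\ref{SLP and Hess} (nonvanishing of $\det H^{1}_{F_{\PPP}}(1,\ldots,1)=9\cdot 1^{3}\cdot(-3)^{4}$ gives bijectivity of $\times\ell^{2}\colon A_{1}\to A_{3}$, while $F_{\PPP}(1,\ldots,1)=6\neq 0$ handles $k=0$) and Proposition~\ref{HRR and Hess} (signature $(4,4)$ instead of the required $(1,7)$ kills the Hodge--Riemann relation), with the real work deferred to the spectral computation in Section~\ref{Hessian matrices}, just as in the paper. One correction: the Hilbert series of $A_{\PPP}$ is $(1,8,18,8,1)$, not $(1,6,6,1)$ (a length-four series is incompatible with $s=4$, and $\dim A_{1}=8$ is what your own count $n-1=7$ presupposes); the correct reason that $k=2$ imposes no condition is not the Hilbert series but that $\times\ell^{s-2k}=\times\ell^{0}$ is the identity on $A_{2}$, so your reduction to $k=0,1$ still stands.
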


\begin{Remark}
If $\Set{v, v'}$ does not a subset of a face of $\PPP$, then $\der_{v}\der_{v'}$ is in $\Ann(F_{\PPP})$. 
In this case, 
\begin{align*}
\der_{1}\der_{7}, \der_{2}\der_{8}, \der_{3}\der_{5}, \der_{4}\der_{6}\in \Ann(F_{\PPP}). 
\end{align*}
Further, monomials of degree $2$ of $A_{\PPP}$ have the following relations: 
\begin{align}\label{eqs}
\begin{array}{ccc}
\der_{1}\der_{2}=\der_{7}\der_{8}, & \der_{1}\der_{4}=\der_{6}\der_{7}, & \der_{1}\der_{5}=\der_{3}\der_{7}, \\
\der_{2}\der_{3}=\der_{5}\der_{8}, & \der_{2}\der_{6}=\der_{4}\der_{8}, & \der_{3}\der_{4}=\der_{5}\der_{6}. 
\end{array}
\end{align}
The other monic monomials of degree two and monomials in left hand sides of the equations \eqref{eqs} generate $A_{2}$. 
Furthermore, since the second Hessian with respect to them does not vanish, they form basis for $A_{2}$.  
Thus the Hilbert series of the algebra associated to $\PPP$ is $(1,8,18,8,1)$. 
The second Hessian matrix $H_{F_\PPP}^{2}$ is in Section \ref{Hessian matrices}. 
\end{Remark}

\subsection{Regular octahedron}\label{n=8}

Let us consider the regular octahedron $\PPP$. 
The regular octahedron has $6$ vertices, $12$ edges and $8$ faces. 
We assign the number $1,2,\ldots, 6$ to the vertices. 
Let $$F(\PPP)=\Set{\Set{1,2,3}, \Set{1,3,4}, \Set{1,4,5}, \Set{1,2,5}, \Set{2,3,6}, \Set{3,4,6}, \Set{4,5,6}, \Set{2,5,6}}. $$ 
See Figure \ref{8}. 
\begin{figure}
\centering
\begin{minipage}{0.3\textwidth}
\includegraphics[width=0.98\textwidth]{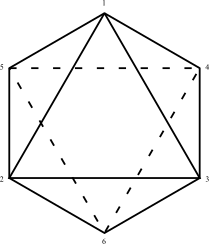}
\end{minipage}
\caption{Octahedron}\label{8}
\end{figure}
Then 
\begin{align*}
F_{\PPP}=x_{1}x_{2}x_{3}+x_{1}x_{3}x_{4}+x_{1}x_{4}x_{5}+x_{1}x_{2}x_{5}+x_{2}x_{3}x_{6}+x_{3}x_{4}x_{6}+x_{4}x_{5}x_{6}+x_{2}x_{5}x_{6}. 
\end{align*}
The algebra $A_{\PPP}$ has $6$ variables. 

We can reduce the number of variables. 
\begin{Proposition}\label{prop reduce}
For the regular octahedron $\PPP$, we have 
\begin{align*}
A_{\PPP}\cong \RR[t_{1}, t_{2}, t_{3}]/(t_{1}^{2}, t_{2}^{2}, t_{3}^{2}). 
\end{align*}
\end{Proposition}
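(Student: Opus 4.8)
The plan is to exploit a factorization of $F_{\PPP}$ that reflects the antipodal structure of the octahedron. The three pairs of opposite vertices are $\{1,6\}$, $\{2,4\}$, $\{3,5\}$: no two vertices in a common pair lie in a common face, and each of the eight faces consists of exactly one vertex from each pair. Consequently
\begin{align*}
F_{\PPP}=(x_{1}+x_{6})(x_{2}+x_{4})(x_{3}+x_{5}),
\end{align*}
which I regard as the conceptual heart of the statement. Differentiating, one finds $\der_{1}F_{\PPP}=\der_{6}F_{\PPP}=(x_{2}+x_{4})(x_{3}+x_{5})$, and likewise $\der_{2}F_{\PPP}=\der_{4}F_{\PPP}$ and $\der_{3}F_{\PPP}=\der_{5}F_{\PPP}$. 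Hence $\der_{1}-\der_{6}$, $\der_{2}-\der_{4}$, and $\der_{3}-\der_{5}$ all lie in $\Ann(F_{\PPP})$. Moreover $F_{\PPP}$ is square-free, so $\der_{v}^{2}\in\Ann(F_{\PPP})$ for every vertex $v$.

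Next I would build the isomorphism as a map out of the quotient ring. In $A_{\PPP}$ the relations above read $\der_{6}=\der_{1}$, $\der_{4}=\der_{2}$, $\der_{5}=\der_{3}$, together with $\der_{1}^{2}=\der_{2}^{2}=\der_{3}^{2}=0$. Therefore sending $t_{i}\mapsto\der_{i}$ for $i=1,2,3$ gives a well-defined graded algebra homomorphism $\phi\colon\RR[t_{1},t_{2},t_{3}]/(t_{1}^{2},t_{2}^{2},t_{3}^{2})\to A_{\PPP}$. It is surjective because $A_{\PPP}$ is generated in degree one by $\der_{1},\ldots,\der_{6}$, and each $\der_{v}$ equals one of $\der_{1},\der_{2},\der_{3}$ in $A_{\PPP}$.

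It remains to check that $\phi$ is injective, for which I would compare dimensions. The source has dimension $2^{3}=8$, so it suffices to show $\dim_{\RR}A_{\PPP}=8$. The degree-one part is controlled by the first derivatives: $\dim A_{1}$ equals the rank of $\{\der_{v}F_{\PPP}\}_{v}$, and since $\der_{1}F_{\PPP},\der_{2}F_{\PPP},\der_{3}F_{\PPP}$ involve pairwise disjoint sets of monomials, they are linearly independent, giving $\dim A_{1}=3$. Because $A_{\PPP}$ is Artinian Gorenstein with socle degree $s=3$ we have $\dim A_{0}=\dim A_{3}=1$, and the Hilbert series is palindromic, so it must be $(1,3,3,1)$ and $\dim_{\RR}A_{\PPP}=8$. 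A surjection between vector spaces of the same finite dimension is bijective, so $\phi$ is the desired isomorphism.

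The main obstacle is pinning down the lower bound $\dim A_{1}=3$ (equivalently, that no unexpected linear relation among the $\der_{i}$ survives in $A_{\PPP}$); once this is combined with palindromicity the count is forced. A slicker alternative would bypass the count entirely: the factorization shows that after the linear change of coordinates $u_{1}=x_{1}+x_{6}$, $u_{2}=x_{2}+x_{4}$, $u_{3}=x_{3}+x_{5}$ (completed to a basis), $F_{\PPP}=u_{1}u_{2}u_{3}$ depends on only three coordinates; invoking the $GL$-equivariance of the apolar (Macaulay) duality then identifies $A_{\PPP}$ with the apolar algebra of $u_{1}u_{2}u_{3}$, which is visibly $\RR[t_{1},t_{2},t_{3}]/(t_{1}^{2},t_{2}^{2},t_{3}^{2})$. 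I would present the direct dimension count as the main argument, since it is self-contained, and mention the change-of-coordinates viewpoint as motivation.
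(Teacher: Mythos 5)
Your proof is correct, but it follows a genuinely different route from the paper's. The paper works in the opposite direction: it defines a surjection $\Phi\colon\RR[\der_{1},\ldots,\der_{6}]\to\RR[t_{1},t_{2},t_{3}]/(t_{1}^{2},t_{2}^{2},t_{3}^{2})$ sending $\der_{1},\der_{6}\mapsto t_{1}$, $\der_{2},\der_{4}\mapsto t_{2}$, $\der_{3},\der_{5}\mapsto t_{3}$, computes $\Ker\Phi$ by an explicit Gr\"obner basis calculation (finding exactly your relations $\der_{1}-\der_{6},\ \der_{2}-\der_{4},\ \der_{3}-\der_{5},\ \der_{1}^{2},\ \der_{2}^{2},\ \der_{3}^{2}$), observes that $\Ker\Phi\subset\Ann(F_{\PPP})$, and concludes by matching the Hilbert series $(1,3,3,1)$ of the two quotients, certifying $h_{1}=h_{2}=3$ through the non-vanishing of the first Hessian with respect to $\{\der_{1},\der_{2},\der_{3}\}$. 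Your factorization $F_{\PPP}=(x_{1}+x_{6})(x_{2}+x_{4})(x_{3}+x_{5})$ appears nowhere in the paper and is the real gain of your approach: it produces the kernel generators with no Gr\"obner computation, it gives $\dim A_{1}=3$ at a glance from the disjoint monomial supports of the three distinct partials (the paper instead invokes the Hessian), and, via the change of coordinates $u_{1}=x_{1}+x_{6}$, $u_{2}=x_{2}+x_{4}$, $u_{3}=x_{3}+x_{5}$ and GL-equivariance of apolarity, it can even bypass the dimension count entirely, as you note. What the paper's route buys is mechanical generality --- the kernel-of-a-presentation-map plus Hilbert-series comparison works verbatim when no such factorization of $F_{\PPP}$ exists --- whereas your argument is shorter, self-contained, and explains structurally why the isomorphism holds. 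Both proofs share the same logical endgame: the transparent relations lie in $\Ann(F_{\PPP})$, and the Gorenstein/palindromic count $(1,3,3,1)$ forces the containment (equivalently, your surjection) to be an equality.
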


\begin{proof}
Let 
\begin{align*}
\Phi: \RR[\der_{1}, \der_{2}, \ldots, \der_{6}] &\to \RR[t_{1}, t_{2}, t_{3}]/(t_{1}^{2}, t_{2}^{2}, t_{3}^{2}) \\
\der_{1}, \der_{6} &\mapsto t_{1}, \\
\der_{2}, \der_{4} &\mapsto t_{2}, \\
\der_{3}, \der_{5} &\mapsto t_{3}.  
\end{align*}
We calculate $\Ker(\Phi)$. 
In our situation, we have a way to calculate the kernel: 
For sets $X=\Set{x_{1}, x_{2}, \ldots, x_{n}}$, $Y=\Set{y_{1}, y_{2}, \ldots, y_{m}}$ of variables, and 
a subset $D$ of the polynomial ring $\RR[Y]$, consider
\begin{align*}
\varphi&: \RR[X] \to \RR[Y]/\langle D\rangle, \\
G&=\Set{x_{i}-\varphi(x_{i})|i\in\Set{1,2,\ldots, n}}\cup D.  
\end{align*}
Then we have $\Ker(\varphi)=\langle G \rangle_{\RR[X, Y]}\cap \RR[X]$. 

For $\Phi$, the map defined above, we consider $D=\Set{t_{1}^{2}, t_{2}^{2}, t_{3}^{2}}$. 
Then we have 
\begin{align*}
G=\Set{\der_{1}-t_{1}, \der_{6}-t_{1}, \der_{2}-t_{2}, \der_{4}-t_{2}, \der_{3}-t_{3}, \der_{5}-t_{3}, t_{1}^{2}, t_{2}^{2}, t_{3}^{2}}. 
\end{align*}
A Gr\"{o}bner basis for $\langle G \rangle_{\RR[X, Y]}$ with respect to the lexicographical order with 
$t_{3}>t_{2}>t_{1}>\der_{6}>\der_{5}>\der_{4}>\der_{3}>\der_{2}>\der_{1}$ is 
\begin{align*}
\Set{
\der_{1}-t_{1}, \der_{2}-t_{2}, \der_{3}-t_{3}, 
\der_{1}-\der_{6}, \der_{2}-\der_{4}, \der_{3}-\der_{5}, 
\der_{1}^{2}, \der_{2}^{2}, \der_{3}^{2}
}. 
\end{align*}
Hence, 
$\Ker(\varphi)=\langle \der_{1}-\der_{6}, \der_{2}-\der_{4}, \der_{3}-\der_{5}, \der_{1}^{2}, \der_{2}^{2}, \der_{3}^{2} \rangle_{\RR[X]}$. 

The Hilbert series of $A_{\PPP}=\RR[\der_{1}, \der_{1}, \ldots, \der_{6}]/\Ann(F_{\PPP})$ is $(1,3,3,1)$ 
since the degree of $F_{\PPP}$ is $3$, and the first Hessian $H^{1}_{F_{\PPP}}$ with respect to the $\Set{\der_{1}, \der_{2}, \der_{3}}$ does not vanish. 

It is obvious that $\Ker(\varphi)\subset \Ann(F)$, and the Hilbert series of $\RR[\der_{1}, \der_{1}, \ldots, \der_{6}]/\Ker(\varphi)$ is $(1,3,3,1)$, so we obtain 
\begin{align*}
A_{\PPP}\cong \RR[\der_{1}, \der_{1}, \ldots, \der_{6}]/\Ker(\varphi) \cong \RR[t_{1}, t_{2}, t_{3}]/(t_{1}^{2}, t_{2}^{2}, t_{3}^{2}). 
\end{align*}
\end{proof}

In general, the algebra $\RR[\der_{1}, \der_{2}, \ldots, \der_{n}]/(\der_{1}^{a_{1}+1}, \der_{2}^{a_{2}+1}, \ldots,  \der_{n}^{a_{n}+1})$, where $a_{i}\geq 0$ for all $i$ satisfies the strong Lefschetz property with a strong Lefschetz element $\sum_{i=1}^{n}\der_{i}$ (\cite{MR3112920}). 
If $F=x_{1}^{a_{1}}x_{2}^{a_{2}}\cdots x_{n}^{a_{n}}$, then we have $A_{F}=\RR[\der_{1}, \der_{2}, \ldots, \der_{n}]/(\der_{1}^{a_{1}+1}, \der_{2}^{a_{2}+1}, \ldots,  \der_{n}^{a_{n}+1})$. 
In \cite{NY2019} and \cite{MNY2020}, when $a_{1}=a_{2}=\cdots=a_{n}=1$, namely $F=x_{1}x_{2}\cdots x_{n}$, 
they show that the algebra $\RR[\der_{1}, \der_{2}, \ldots, \der_{n}]/(\der_{1}^{2}, \der_{2}^{2}, \ldots,  \der_{n}^{2})$ satisfies the strong Lefschetz property and Hodge--Riemann relation at degree one on the positive orthant. 
In fact, the homogeneous polynomial $F=x_{1}x_{2}\cdots x_{n}$ coincides with the Kirchhoff polynomial a tree graph with $n+1$ vertices. 

To summarize of this section, we obtain the following. 

\begin{Theorem}\label{n=8}
The algebra associated to the regular octahedron satisfies the strong Lefschetz property and Hodge--Riemann relation on the positive orthant. 
\end{Theorem}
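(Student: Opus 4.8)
The plan is to deduce Theorem~\ref{n=8} as a direct consequence of the structural isomorphism established in Proposition~\ref{prop reduce} together with the two general facts already assembled in the excerpt. By Proposition~\ref{prop reduce} we have $A_{\PPP}\cong \RR[t_{1}, t_{2}, t_{3}]/(t_{1}^{2}, t_{2}^{2}, t_{3}^{2})$, which is exactly the algebra $A_{F}$ for the monomial $F=t_{1}t_{2}t_{3}=x_{1}x_{2}\cdots x_{n}$ in $n=3$ variables. So the whole theorem reduces to the known behaviour of this monomial complete-intersection algebra, and the work is really just citing the right two results in the right order.

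First I would settle the strong Lefschetz property. The algebra $\RR[t_{1}, t_{2}, t_{3}]/(t_{1}^{2}, t_{2}^{2}, t_{3}^{2})$ is of the form $\RR[\der_{1}, \der_{2}, \ldots, \der_{n}]/(\der_{1}^{a_{1}+1}, \ldots, \der_{n}^{a_{n}+1})$ with $a_{1}=a_{2}=a_{3}=1$, so by the result of Stanley quoted in the excerpt (\cite{MR3112920}) it satisfies the strong Lefschetz property with strong Lefschetz element $t_{1}+t_{2}+t_{3}$. Transporting this back through the isomorphism $\Phi$ of Proposition~\ref{prop reduce}, the preimage $\der_{1}+\der_{2}+\der_{3}$ (equivalently any of the lifts $\sum_{i}\der_{i}$ consistent with the identifications $\der_{1}=\der_{6}$, $\der_{2}=\der_{4}$, $\der_{3}=\der_{5}$) is a strong Lefschetz element of $A_{\PPP}$.

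Next I would establish the Hodge--Riemann relation on the positive orthant. Since $F=x_{1}x_{2}\cdots x_{n}$ is precisely the Kirchhoff polynomial of a tree on $n+1$ vertices, as the excerpt notes, the results of \cite{NY2019} and \cite{MNY2020} apply: the algebra $\RR[\der_{1}, \der_{2}, \ldots, \der_{n}]/(\der_{1}^{2}, \ldots, \der_{n}^{2})$ satisfies the strong Lefschetz property and the Hodge--Riemann relation at degree one on the positive orthant. Because the socle degree here is $s=3$, the condition $k\le s/2$ forces $k\in\{0,1\}$, and the $k=0$ case is trivial, so the Hodge--Riemann relation at degree one is in fact the full Hodge--Riemann relation. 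Again I transport this across the isomorphism of Proposition~\ref{prop reduce}, checking that the positive orthant in the $t$-variables pulls back to the positive orthant in the original $\der$-variables, which it does since each $t_{i}$ is the common image of two coordinate derivations with positive coefficients.

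The main obstacle, such as it is, is bookkeeping rather than a genuine mathematical difficulty: one must verify that the isomorphism $\Phi$ is compatible with the relevant Poincar\'e dualities and positivity structure, so that transporting the Lefschetz element and the positive-definiteness of the form $Q^{1}_{\ell}$ is legitimate and that positivity of $F_{\PPP}(\bm{a})$ is preserved. Concretely I would note that $\Phi$ identifies the apex polynomial correctly and that $F_{\PPP}(\bm{a})>0$ on the positive orthant, so Proposition~\ref{HRR and Hess} is applicable and the eigenvalue signature of the first Hessian is the one the Hodge--Riemann relation demands. Once this compatibility is recorded, the theorem follows immediately from the two cited general results.
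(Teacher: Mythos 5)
Your proposal is correct and follows essentially the same route as the paper: reduce via Proposition~\ref{prop reduce} to $\RR[t_{1},t_{2},t_{3}]/(t_{1}^{2},t_{2}^{2},t_{3}^{2})$, then cite \cite{MR3112920} for the strong Lefschetz property and \cite{NY2019}, \cite{MNY2020} for the Hodge--Riemann relation at degree one on the positive orthant, viewing $x_{1}x_{2}x_{3}$ as the Kirchhoff polynomial of a tree. Your added observations --- that socle degree $3$ makes the degree-one Hodge--Riemann relation the full relation, and that the isomorphism transports positive linear forms and the Poincar\'e pairing compatibly --- are details the paper leaves implicit.
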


\subsection{Regular dodecahedron}\label{n=12}

Let us consider the regular dodecahedron $\PPP$. 
The regular dodecahedron has $20$ vertices, $30$ edges and $12$ faces. 
We assign the number $1,2,\ldots, 20$ to the vertices. 
Let 
\begin{align*}
F(\PPP)=\left\{
\begin{array}{cccc}
\Set{1,2,3,4,5}, 
\Set{1,2,6,7,19}, 
\Set{2,3,7,8,20}, 
\Set{3,4,8,9,16}, \\
\Set{4,5,9,10,17}, 
\Set{1,5,6,10,18}, 
\Set{11,12,13,14,15}, \\
\Set{11,12,16,17,9}, 
\Set{12,13,17,18,10},  
\Set{13,14,18,19,6}, \\
\Set{14,15,19,20,7}, 
\Set{11,15,16,20,8}
\end{array}
\right\}. 
\end{align*}
See Figure \ref{12}. 
\begin{figure}
\centering
\begin{minipage}{0.3\textwidth}
\includegraphics[width=0.98\textwidth]{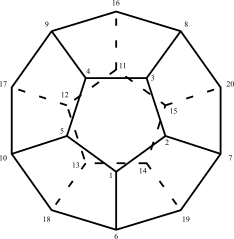}
\end{minipage}
\caption{Dodecahedron}\label{12}
\end{figure}
Then 
\begin{align*}
F_{\PPP}=
&x_{1}x_{2}x_{3}x_{4}x_{5}+x_{1}x_{2}x_{6}x_{7}x_{19}+x_{2}x_{3}x_{7}x_{8}x_{20}+x_{3}x_{4}x_{8}x_{9}x_{16}\\
+&x_{4}x_{5}x_{9}x_{10}x_{17}+x_{1}x_{5}x_{6}x_{10}x_{18}+x_{11}x_{12}x_{13}x_{14}x_{15}+x_{11}x_{12}x_{16}x_{17}x_{9}\\
+&x_{12}x_{13}x_{17}x_{18}x_{10}+x_{13}x_{14}x_{18}x_{19}x_{6}+x_{14}x_{15}x_{19}x_{20}x_{7}+x_{11}x_{15}x_{16}x_{20}x_{8}. 
\end{align*}

The socle degree of $A_{\PPP}$ is $5$, in other words, $A_{\PPP}=\bigoplus_{k=0}^{5}A_{k}$. 
We calculate the first and second Hessian matrices of $F_{\PPP}$. 

\begin{Proposition}\label{eigen n=12 Hess1}
Let $\bm{a}=(a_{i})_{i}\in\RR^{20}$ with $a_{i}=1$ for $i\not\in\Set{6,7,8,9,10}$ and $a_{i}=0$ for $i\in\Set{6,7,8,9,10}$. 
The matrix $H_{F_\PPP}^{1}(\bm{a})$ is non-degenerate. 
Moreover, the number of the positive eigenvalues is more than two.     
\end{Proposition}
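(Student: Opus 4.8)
The plan is to write down the Hessian $H:=H^{1}_{F_{\PPP}}(\bm{a})$ explicitly and then read off its signature by exploiting a five-fold rotational symmetry of the configuration. Since $F_{\PPP}$ is multilinear (each variable occurs to the first power in every monomial), all diagonal entries $\der_{i}^{2}F_{\PPP}$ vanish, and for $i\neq j$ the entry $(\der_{i}\der_{j}F_{\PPP})(\bm{a})$ equals the number of faces $F\in F(\PPP)$ containing both $i$ and $j$ whose remaining three vertices avoid $\{6,7,8,9,10\}$ (each surviving monomial evaluates to $1$ at $\bm{a}$). Running through the twelve pentagons produces $H$ in closed form. The key combinatorial fact I would record first is that the twenty vertices fall into four orbits $\{1,\dots,5\}$, $\{6,\dots,10\}$, $\{16,\dots,20\}$, $\{11,\dots,15\}$ of the order-five rotation $\rho$ sending $i\mapsto i+1$ cyclically within each orbit; a direct check shows $\rho$ permutes $F(\PPP)$ and fixes the set $\{6,\dots,10\}$, hence fixes $\bm{a}$. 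Consequently the permutation matrix of $\rho$ commutes with $H$.

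For the positivity statement I would restrict $H$ to its $\rho$-invariant subspace $W$, the four-dimensional space spanned by the orbit-sum vectors $\mathbf{1}_{A},\mathbf{1}_{B},\mathbf{1}_{C},\mathbf{1}_{D}$ (one per orbit). Because $H$ commutes with $\rho$, the subspace $W$ is $H$-invariant, and since the four generators are orthogonal of equal norm the restriction is represented by the symmetric matrix $\left(\begin{smallmatrix} 4&0&0&0\\ 0&2&2&2\\ 0&2&0&0\\ 0&2&0&4\end{smallmatrix}\right)$, whose entries are the constant orbit-to-orbit incidence sums of $H$. This splits as $[4]\oplus M_{0}$ with $M_{0}=\left(\begin{smallmatrix} 2&2&2\\ 2&0&0\\ 2&0&4\end{smallmatrix}\right)$, and the characteristic polynomial of $M_{0}$ factors as $(\lambda-2)(\lambda^{2}-4\lambda-8)$, giving eigenvalues $2$ and $2\pm2\sqrt{3}$. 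Thus $W$ already carries the three positive eigenvalues $4$, $2$, $2+2\sqrt{3}$ of $H$, so $H$ has more than two positive eigenvalues.

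For non-degeneracy I would diagonalize $H$ completely along the $\ZZ/5\ZZ$-action: for each character $\omega^{j}$ ($\omega=e^{2\pi i/5}$, $j=0,\dots,4$) the corresponding eigenspace of $\rho$ is four-dimensional and $H$ restricts there to a Hermitian $4\times4$ matrix $\hat H(j)$. The top-pentagon orbit decouples (its cross-blocks vanish), so $\hat H(j)=[\,s_{j}\,]\oplus M_{j}$ with $s_{j}\in\{4,-1\}$ and a $3\times3$ block $M_{j}$ whose diagonal records the within-orbit couplings (the $5$-cycle on $\{6,\dots,10\}$ contributes $2\cos(2\pi j/5)$, the zero block on $\{16,\dots,20\}$ contributes $0$, and the $J-I$ on $\{11,\dots,15\}$ contributes $\eta_{j}\in\{4,-1\}$) and whose off-diagonal entries are the finite Fourier transforms of the inter-orbit couplings, each equal to $0$ or $b_{j}:=2\cos(4\pi j/5)$. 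A one-line cofactor expansion gives $\det M_{j}=-b_{j}^{2}\eta_{j}$; since $b_{j}\neq0$ and $s_{j},\eta_{j}\neq0$ for every $j$, each $\hat H(j)$ is invertible, whence $\det H=\prod_{j=0}^{4}\det\hat H(j)\neq0$.

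The main obstacle is the bookkeeping in the very first step: one must determine, face by face, exactly which entries survive the specialization $a_{6}=\dots=a_{10}=0$, keeping in mind that a pentagon may still contribute to $(\der_{i}\der_{j}F_{\PPP})(\bm{a})$ even when it meets $\{6,\dots,10\}$, provided the vertices it shares with that set are precisely $i$ and $j$. Once the entries and the rotation $\rho$ are correctly pinned down, the reduction to the small blocks and the eigenvalue computation are routine. Together with $F_{\PPP}(\bm{a})=2>0$, this also shows via Proposition \ref{HRR and Hess} that the Hodge--Riemann relation fails at degree one for $\ell_{\bm{a}}$, while non-degeneracy and Theorem \ref{SLP and Hess} make $\ell_{\bm{a}}$ a strong Lefschetz element there.
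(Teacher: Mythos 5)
Your proof is correct, and it takes a genuinely different route from the paper's. Your computed data agree with the paper: the evaluated Hessian displayed in Section \ref{appen n=12} is (up to swapping your last two orbits) the block matrix with diagonal blocks $A'=J-I$, $D$ (the $5$-cycle), $A'$, $\zero$ and with the circulant $C$ sitting exactly in the off-diagonal positions you describe. From there the paper argues directly on the blocks rather than via the $\ZZ/5\ZZ$ symmetry: the $\{1,\dots,5\}$ block $A'$ decouples and contributes the positive eigenvalue $4$; the remaining $15\times 15$ block is invertible by kernel-chasing from the invertibility of $C$ and $A'$ (if $(x,y,z)$ is in its kernel, then $Cx=0$ forces $x=0$, then $A'y=0$ forces $y=0$, then $Cz=0$ forces $z=0$); and since that block has trace zero and is non-degenerate, it has at least one positive eigenvalue, so $H'$ has at least two positive eigenvalues in total. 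You instead push the five-fold rotation through the whole matrix: the orbit-sum restriction yields the explicit eigenvalues $4,\,2,\,2\pm 2\sqrt{3}$ (your $4\times 4$ orbit matrix and the factorization $(\lambda-2)(\lambda^{2}-4\lambda-8)$ both check out), and the character blocks $\hat H(j)=[s_{j}]\oplus M_{j}$ with $\det M_{j}=-b_{j}^{2}\eta_{j}\neq 0$ (valid because $b_{j}=2\cos(4\pi j/5)\neq 0$ for every $j$) give $\det H\neq 0$. The trade-off: the paper's argument is shorter and needs no Fourier analysis, but it only establishes \emph{at least two} positive eigenvalues, while the proposition literally asserts \emph{more than two}; your argument produces three, so it matches the statement as written (either bound suffices to refute the Hodge--Riemann relation via Proposition \ref{HRR and Hess}, which requires exactly one positive eigenvalue). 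Your method is also precisely the circulant technique of \cite{Y2018} that the paper itself uses for the hexahedron and icosahedron, so it has the added virtue of treating the dodecahedron uniformly with those cases.
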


\begin{Proposition}\label{eigen n=12 Hess2}
Let $\bm{a}=(a_{i})_{i}\in\RR^{20}$ with $a_{1}=0$ and $a_{i}=1$ for $i\neq 1$. 
The matrix $H_{F_\PPP}^{2}(\bm{a})$ with some basis for $A_2$ is not degenerate. 
\end{Proposition}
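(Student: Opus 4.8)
The plan is to translate the statement into the bijectivity of one multiplication map and then to verify this by an organized computation. I first record that non-degeneracy of $H_{F_\PPP}^{2}(\bm{a})$ does not depend on the choice of basis for $A_2$: changing $\Lambda_2$ replaces $H_{F_\PPP}^{2}(\bm{a})$ by a congruent matrix $M^{\top}H_{F_\PPP}^{2}(\bm{a})M$, scaling its determinant by $(\det M)^{2}\neq 0$, so it suffices to work with one convenient basis. Since the socle degree is $s=5$ and $k=2$, we have $s-2k=1$, and Theorem \ref{SLP and Hess} identifies non-degeneracy of $H_{F_\PPP}^{2}(\bm{a})$ with the bijectivity of
\begin{align*}
\times\ell_{\bm{a}}\colon A_2\to A_3,\qquad \ell_{\bm{a}}=\sum_{i=2}^{20}\der_i.
\end{align*}
Because the Hilbert series of the Gorenstein algebra $A_\PPP$ is palindromic, $\dim_\RR A_2=\dim_\RR A_3$, so it is enough to establish injectivity.

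Next I would pin down $A_2$ explicitly. Since $F_\PPP$ is multilinear we have $\der_v^{2}=0$ in $A_2$, and a squarefree monomial $\der_u\der_v$ is nonzero precisely when $\{u,v\}$ lies in a common pentagonal face of $\PPP$; two such monomials are equal in $A_2$ exactly when they act identically on $F_\PPP$. Reading off these coincidences from the face list gives a monomial basis $\Lambda_2$ together with all linear relations among the $\der_u\der_v$. Each matrix entry $(\der_u\der_v\,\der_{u'}\der_{v'})F_\PPP$ is then a degree-one polynomial in the $x_w$, and $H_{F_\PPP}^{2}(\bm{a})$ results from evaluating these at $\bm{a}$. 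The purpose of the choice $a_1=0$ is to sparsify: every entry whose reduced polynomial still contains $x_1$ vanishes, which thins out exactly the rows and columns attached to the three faces through vertex $1$ and makes the matrix amenable to computation.

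The main obstacle is the last step, proving $\det H_{F_\PPP}^{2}(\bm{a})\neq 0$ for a matrix of size $\dim_\RR A_2$. My primary route is a direct determinant computation with the explicit matrix, which is recorded in Section \ref{Hessian matrices}. To make this tractable I would exploit the residual symmetry: the point $\bm{a}$ is fixed by the order-six subgroup of the symmetry group of $\PPP$ stabilizing vertex $1$; this subgroup acts on $A_2$ and commutes with $\times\ell_{\bm{a}}$, so in a symmetry-adapted basis $H_{F_\PPP}^{2}(\bm{a})$ block-diagonalizes along the isotypic components and its determinant factors as a product of much smaller determinants, each of which can be checked to be nonzero. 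An alternative, purely algebraic route avoids the determinant altogether: given $\xi\in A_2$ with $\ell_{\bm{a}}\xi=0$ in $A_3$, expand $\ell_{\bm{a}}\xi F_\PPP$ as a degree-two polynomial and deduce from the vanishing of its coefficients that $\xi\in\Ann(F_\PPP)$, the bookkeeping being controlled by the vertex--face incidences of the dodecahedron.

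Finally, this is exactly what the strong Lefschetz property requires at $k=2$: exhibiting a single point at which $H_{F_\PPP}^{2}$ is non-degenerate shows that the second Hessian does not vanish identically as a polynomial, so by the Remark following Theorem \ref{SLP and Hess}, together with Proposition \ref{eigen n=12 Hess1} and the non-vanishing of $F_\PPP=H_{F_\PPP}^{0}$, the algebra $A_\PPP$ has the strong Lefschetz property.
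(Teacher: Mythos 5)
Your reduction is sound and matches the paper's framework: with $s=5$, $k=2$ you correctly invoke Theorem \ref{SLP and Hess} to identify non-degeneracy of $H_{F_\PPP}^{2}(\bm{a})$ with bijectivity of $\times\ell_{\bm{a}}\colon A_2\to A_3$, your congruence remark correctly disposes of the basis-dependence, and your identification of $A_2$ as spanned by the $90$ squarefree monomials $\der_u\der_v$ with $\{u,v\}$ in a common face is exactly what the paper uses. But there is a genuine gap: the proposition is, in the end, a single finite numerical claim, and your proposal never establishes it. Both of your routes stop precisely where the real work begins --- the symmetry route ends with ``each of which can be checked to be nonzero'' (this check is the entire content of the statement, and the isotypic blocks for the order-six stabilizer of vertex $1$ are still of size roughly $15$--$20$, so nothing has been verified), and the kernel route ends with ``the bookkeeping being controlled by the vertex--face incidences,'' which is a restatement of the problem, not a solution. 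A proof must exhibit the verification; the paper does so by computing the full $90\times 90$ determinant with CoCalc, obtaining $\det H^{2}_{F_{\PPP}}(\bm{a})=342456532992\neq 0$.

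Two smaller points. First, your claim that one can read off ``all linear relations among the $\der_u\der_v$'' as coincidences of monomials (as happens for the hexahedron) is not justified a priori; the clean logic, implicit in the paper's computation, is that the $90$ monomials \emph{span} $A_2$, any linear relation among them would force the Gram-type matrix $\bigl((\der_u\der_v\,\der_{u'}\der_{v'}F_\PPP)(\bm{a})\bigr)$ to be singular, and hence its computed non-degeneracy simultaneously proves that they form a basis and that $H^{2}_{F_\PPP}(\bm{a})$ is non-degenerate. Second, the purpose of $a_1=0$ is not sparsification: the paper's Remark notes that at the all-ones point both $\det H^{1}_{F_\PPP}$ and $\det H^{2}_{F_\PPP}$ vanish, so a perturbed evaluation point is needed to get non-degeneracy at all; $\bm{a}$ with $a_1=0$ is simply one that works.
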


The matrices  $H_{F_\PPP}^{1}(\bm{a})$ and $H_{F_\PPP}^{2}(\bm{a})$ are in Section \ref{Hessian matrices}. 
Also the proof of Propositions \ref{eigen n=12 Hess1} and \ref{eigen n=12 Hess2} are in Section \ref{Hessian matrices}. 
To show Proposition \ref{eigen n=12 Hess2}, we use CoCalc \cite{CoCalc}. 

To summarize of this section, we obtain the following. 

\begin{Theorem}\label{n=12}
The algebra associated to the regular dodecahedron satisfies the strong Lefschetz property, 
but does not satisfy the Hodge--Riemann relation with respect to $\sum_{i\neq 6,7,8,9,10}\der_{i}$. 
\end{Theorem}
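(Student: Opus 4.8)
The plan is to establish the two halves of Theorem~\ref{n=12} separately, and for each half to leverage the dictionary between Lefschetz-type properties and Hessian determinants that was set up in Theorem~\ref{SLP and Hess} and Proposition~\ref{HRR and Hess}. For the strong Lefschetz property, the socle degree is $s=5$, so by the Remark following Theorem~\ref{SLP and Hess} it suffices to exhibit, for each relevant $k$, a single point at which the $k$-th Hessian does not vanish; since $5$ is odd, the nontrivial cases are $k=1$ and $k=2$ (the map $\times\ell^{s-2k}$ for $k=0$ is automatic once the algebra is Gorenstein of the stated socle degree). First I would invoke Proposition~\ref{eigen n=12 Hess1}, which asserts that $H^{1}_{F_{\PPP}}(\bm a)$ is non-degenerate at the specified point $\bm a$ (the one with $a_{i}=0$ exactly on $\Set{6,7,8,9,10}$), and Proposition~\ref{eigen n=12 Hess2}, which asserts that $H^{2}_{F_{\PPP}}(\bm a')$ is non-degenerate at the point $\bm a'$ with $a_{1}=0$ and all other coordinates $1$. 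Because the entries of each Hessian matrix are polynomials in $\bm a$, a single non-vanishing value forces the determinant to be a nonzero polynomial, hence nonzero on an open dense set; applying Theorem~\ref{SLP and Hess} at $k=1$ and $k=2$ then yields bijectivity of both multiplication maps for a generic $\ell_{\bm a}$, which is exactly the strong Lefschetz property.

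For the failure of the Hodge--Riemann relation, the natural candidate is the element $\ell=\sum_{i\neq 6,7,8,9,10}\der_{i}=\ell_{\bm a}$ associated to the same point $\bm a$ appearing in Proposition~\ref{eigen n=12 Hess1}. By Proposition~\ref{HRR and Hess}, the Hodge--Riemann relation at degree one with respect to $\ell_{\bm a}$ holds if and only if $H^{1}_{F_{\PPP}}(\bm a)$ has exactly one positive eigenvalue and $n-1$ negative ones. The hypothesis of that proposition requires $F_{\PPP}(\bm a)>0$; I would first check this directly from the explicit expansion of $F_{\PPP}$, noting that the monomial $x_{11}x_{12}x_{13}x_{14}x_{15}$ survives at $\bm a$ (its vertices avoid $\Set{6,7,8,9,10}$), so $F_{\PPP}(\bm a)\geq 1>0$. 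Then Proposition~\ref{eigen n=12 Hess1} tells us the number of positive eigenvalues exceeds two, which contradicts the ``exactly one positive eigenvalue'' criterion; by the iff in Proposition~\ref{HRR and Hess}, the Hodge--Riemann relation fails with respect to $\ell_{\bm a}$. This is precisely the negative half of the theorem.

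The only subtlety I anticipate is the compatibility of the Hodge--Riemann setup with the chosen $\bm a$: Proposition~\ref{HRR and Hess} presumes that $\ell_{\bm a}$ is a strong Lefschetz element (its proof decomposes $A_{1}=\RR\ell_{\bm a}\oplus\Ker(\times\ell_{\bm a}^{s-1})$, which needs $\times\ell_{\bm a}^{s}\colon A_{0}\to A_{s}$ to be bijective, equivalently $H^{1}_{F_{\PPP}}(\bm a)$ non-degenerate). Fortunately this non-degeneracy is exactly what Proposition~\ref{eigen n=12 Hess1} guarantees at this very point, so the hypotheses of Proposition~\ref{HRR and Hess} are met and the eigenvalue count is meaningful. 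The potential gap is that ``more than two positive eigenvalues'' in Proposition~\ref{eigen n=12 Hess1} must be read as ``at least two,'' which already defeats the required count of exactly one; I would make this logical step explicit rather than relying on the precise eigenvalue multiplicities. Thus the main work is entirely pushed into the two Hessian computations of Section~\ref{Hessian matrices}, and the argument here is the short bookkeeping that feeds their conclusions into Theorem~\ref{SLP and Hess} and Proposition~\ref{HRR and Hess}.
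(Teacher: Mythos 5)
Your proposal is correct and follows essentially the same route as the paper: the strong Lefschetz property comes from feeding the non-degeneracy statements of Propositions \ref{eigen n=12 Hess1} and \ref{eigen n=12 Hess2} into Theorem \ref{SLP and Hess} (genericity via the non-vanishing of the Hessian determinants as polynomials), and the failure of the Hodge--Riemann relation with respect to $\sum_{i\neq 6,7,8,9,10}\der_{i}$ comes from the eigenvalue count in Proposition \ref{eigen n=12 Hess1} combined with the criterion of Proposition \ref{HRR and Hess}. One tiny correction: the hypothesis needed in Proposition \ref{HRR and Hess} is only $F_{\PPP}(\bm a)>0$ (bijectivity of $\times\ell_{\bm a}^{s}\colon A_{0}\to A_{s}$ is equivalent to non-vanishing of the $0$th Hessian $F_{\PPP}(\bm a)$, not of $H^{1}_{F_{\PPP}}(\bm a)$), but since you verify $F_{\PPP}(\bm a)>0$ directly, your argument is complete as written.
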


\begin{Remark}
Let 
$\bm{a}=(a_{i})_{i}\in\RR^{20}$ with $a_{i}=1$ for all $i$; 
$\bm{b}=(b_{i})_{i}\in\RR^{20}$ with $b_{i}=1$ for $i\not\in\Set{6,7,8,9,10}$ and $b_{i}=0$ for $i\in\Set{6,7,8,9,10}$; 
and 
$\bm{c}=(c_{i})_{i}\in\RR^{20}$ with $c_{1}=0$ and $c_{i}=1$ for $i\neq 1$. 
We have 
\begin{itemize}
\item $\det H^{1}_{F_{\PPP}}(\bm{a})=0$ and $\det H^{2}_{F_{\PPP}}(\bm{a})=0$, 
\item $\det H^{1}_{F_{\PPP}}(\bm{b})\neq 0$ and $\det H^{2}_{F_{\PPP}}(\bm{b})=0$, 
\item $\det H^{1}_{F_{\PPP}}(\bm{c})\neq 0$ and $\det H^{2}_{F_{\PPP}}(\bm{c})\neq 0$. 
\end{itemize}
Hence, $\ell_{\bm{a}}=\sum_{i=1}^{20}a_{i}\der_{i}$ and $\ell_{\bm{b}}=\sum_{i=1}^{20}b_{i}\der_{i}$ are not strong Lefschetz elements. 
The linear element $\ell_{\bm{c}}=\sum_{i=1}^{20}c_{i}\der_{i}$ is a strong Lefschetz element. 
We use Cocalc for calculation the determinant $\det H^{1}_{F_{\PPP}}(\bm{c})$. 
\end{Remark}

\begin{Remark}
The Hilbert series of $A_{\PPP}$ is $(1,20,90,20,1)$. 
\end{Remark}

\subsection{Regular icosahedron}\label{n=20}

Let us consider the regular icosahedron $\PPP$. 
The regular icosahedron has $12$ vertices, $30$ edges and $20$ faces. 
We assign the number $1,2,\ldots, 12$ to the vertices. 
Let
\begin{align*}
F(\PPP)=
\left\{
\begin{array}{cccc}
\Set{1,2,3}, \Set{1,3,4}, \Set{1,4,5}, \Set{3,4,9}, \Set{1,5,6}, \\
\Set{1,2,6}, \Set{2,3,8}, \Set{3,8,9}, \Set{2,6,7}, \Set{2,7,8}, \\
\Set{10,11,12}, \Set{4,5,10}, \Set{4,9,10}, \Set{5,6,11}, \Set{5,10,11}, \\
\Set{9,10,12}, \Set{8,9,12}, \Set{6,7,11}, \Set{7,11,12}, \Set{7,8,12}
\end{array}
\right\}. 
\end{align*}
See Figure \ref{20}. 
\begin{figure}
\centering
\begin{minipage}{0.3\textwidth}
\includegraphics[width=0.98\textwidth]{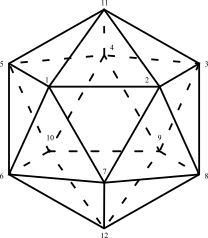}
\end{minipage}
\caption{Icosahedron}\label{20}
\end{figure}
Then 
\begin{align*}
F_{\PPP}=
&x_{1}x_{2}x_{3}+x_{1}x_{3}x_{4}+x_{1}x_{4}x_{5}+x_{3}x_{4}x_{9}+x_{1}x_{5}x_{6}\\
+&x_{1}x_{2}x_{6}+x_{2}x_{3}x_{8}+x_{3}x_{8}x_{9}+x_{2}x_{6}x_{7}+x_{2}x_{7}x_{8}\\
+&x_{10}x_{11}x_{12}+x_{4}x_{5}x_{10}+x_{4}x_{9}x_{10}+x_{5}x_{6}x_{11}+x_{5}x_{10}x_{11}\\
+&x_{9}x_{10}x_{12}+x_{8}x_{9}x_{12}+x_{6}x_{7}x_{11}+x_{7}x_{11}x_{12}+x_{7}x_{8}x_{12}. 
\end{align*}

\begin{Proposition}\label{eigen n=20}
The eigenvalues of the matrix $H_{F_\PPP}^{1}(1,1,\ldots, 1)$ of the homogeneous polynomial $F_{\PPP}$ are $10,-2,2\sqrt{5},-2\sqrt{5}$ with the dimensions of the eigenspaces $1,5,3,3$, respectively. 

\end{Proposition}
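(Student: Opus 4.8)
The plan is to identify $H_{F_\PPP}^{1}(1,\ldots,1)$ with twice the adjacency matrix of the $1$-skeleton of the icosahedron, and then to read off the spectrum of that graph from its symmetry. First I would compute the Hessian entries directly. Since $F_{\PPP}$ is multilinear (each variable occurs to degree one in every monomial), the diagonal second derivatives $\der_i^2 F_{\PPP}$ vanish, so the diagonal of $H_{F_\PPP}^{1}$ is zero. For $i\neq j$, the entry $\der_i\der_j F_{\PPP}$ is the sum of $x_k$ over all triangular faces $\{i,j,k\}\in F(\PPP)$ containing the pair $\{i,j\}$, which at $(1,\ldots,1)$ equals the number of faces containing both $i$ and $j$. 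Two vertices lie in a common triangular face exactly when they span an edge of $\PPP$ (any two vertices of a triangle are joined by one of its edges), and every edge of the icosahedron is shared by exactly two faces; hence the entry is $2$ when $\{i,j\}\in E(\PPP)$ and $0$ otherwise. Thus
\[
H_{F_\PPP}^{1}(1,\ldots,1)=2A,
\]
where $A$ is the adjacency matrix of the $5$-regular, $12$-vertex icosahedral graph. The claimed eigenvalues $10,-2,2\sqrt5,-2\sqrt5$ are then exactly twice the eigenvalues $5,-1,\sqrt5,-\sqrt5$ of $A$, with the same multiplicities $1,5,3,3$, so it suffices to determine the spectrum of $A$.

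For that I would exploit the icosahedral rotation group $G\cong A_5$ (order $60$) acting on $V(\PPP)$. The action is vertex-transitive and commutes with $A$, so $A$ is $G$-equivariant and acts as a scalar on each isotypic component of the $12$-dimensional permutation representation $\pi$. The key step is to show $\pi$ is multiplicity-free. The permutation character is $\chi_\pi(g)=\#\{\text{vertices fixed by }g\}$, which equals $12,0,0,2,2$ on the five conjugacy classes of $A_5$ (identity; order $2$; order $3$; and the two order-$5$ classes), since only order-$5$ rotations fix vertices, namely the two endpoints of their axis. Taking inner products with the irreducible characters of $A_5$ (of dimensions $1,3,3,4,5$) and using $\tfrac{1+\sqrt5}{2}+\tfrac{1-\sqrt5}{2}=1$ yields
\[
\pi\cong\mathbf 1\oplus\mathbf 3\oplus\mathbf 3'\oplus\mathbf 5,
\]
with the $\mathbf 4$ absent. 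Hence $A$ has exactly four eigenvalues, with multiplicities $1,3,3,5$ carried by $\mathbf 1,\mathbf 3,\mathbf 3',\mathbf 5$.

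To pin the values, note that the trivial component is spanned by the all-ones vector, on which $A$ acts by the degree $5$. Writing $\alpha$ for the eigenvalue on $\mathbf 5$ (multiplicity $5$) and $\beta,\gamma$ for those on $\mathbf 3,\mathbf 3'$ (each multiplicity $3$), I would use the moment identities $\tr A=0$, $\tr A^{2}=\sum_v\deg v=60$, and $\tr A^{3}=6\cdot\#F(\PPP)=120$ (the last because every triangle of the graph is a face, and each contributes six closed $3$-walks), giving
\[
5+5\alpha+3(\beta+\gamma)=0,\quad 25+5\alpha^{2}+3(\beta^{2}+\gamma^{2})=60,\quad 125+5\alpha^{3}+3(\beta^{3}+\gamma^{3})=120.
\]
Since $\mathbf 5$ is realizable over $\QQ$ while $\mathbf 3,\mathbf 3'$ are interchanged by $\sqrt5\mapsto-\sqrt5$, the eigenvalue $\alpha$ is rational and $\beta,\gamma$ are conjugate in $\QQ(\sqrt5)$; the system then forces $\alpha=-1$ and $\{\beta,\gamma\}=\{\sqrt5,-\sqrt5\}$. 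Doubling recovers the stated spectrum of $H_{F_\PPP}^{1}(1,\ldots,1)$.

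The main obstacle is the bookkeeping in the character computation—correctly counting vertex fixed-points on each conjugacy class and verifying that the $4$-dimensional irreducible genuinely drops out—together with the minor Galois argument that places the irrational eigenvalues on the two $3$-dimensional components and thereby makes the trace system have a unique admissible solution. A fully elementary alternative, in the computational spirit of the dodecahedral case, is to bypass symmetry and compute the characteristic polynomial of $2A$ directly from the explicit face list, checking that it factors as $(\lambda-10)(\lambda+2)^{5}(\lambda^{2}-20)^{3}$; this is routine (and can be done with CoCalc) but less transparent than the representation-theoretic route.
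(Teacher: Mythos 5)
Your proposal is correct, but it takes a genuinely different route from the paper. Your opening reduction is the right observation and is implicitly what the paper uses too: since $F_{\PPP}$ is multilinear and every pair of vertices in a common triangular face spans an edge lying in exactly two faces, $H_{F_\PPP}^{1}(1,\ldots,1)=2A$ with $A$ the adjacency matrix of the icosahedral graph. From there the paper exploits only the cyclic symmetry of order $5$ (rotation about a vertex axis): it writes the $12\times 12$ matrix in block form, block-diagonalizes it via $5$th roots of unity following \cite[Section 2]{Y2018}, reduces to explicit $2\times 2$ matrices $M_k', M_k''$, and exhibits concrete eigenvectors for each eigenvalue. You instead invoke the full rotation group $A_5$: the permutation character $(12,0,0,2,2)$ is multiplicity-free (the $\mathbf 4$ drops out, as your inner-product computation shows), so by Schur's lemma $A$ has at most four eigenvalues with multiplicities $1,3,3,5$; the values are then pinned by the moments $\tr A=0$, $\tr A^2=60$, $\tr A^3=120$ together with the Galois constraint that the eigenvalue on $\mathbf 5$ is rational while those on $\mathbf 3,\mathbf 3'$ are conjugate over $\QQ(\sqrt5)$. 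I checked that this last step really does close: eliminating $p=-\tfrac{5(1+\alpha)}{6}$ and $q^2$ from your three equations yields $11\alpha^3+15\alpha^2-30\alpha-34=(\alpha+1)(11\alpha^2+4\alpha-34)=0$, whose only rational root is $\alpha=-1$, forcing $p=0$, $q^2=1$. The trade-off: the paper's circulant computation is elementary and produces explicit eigenvectors, at the cost of writing out and manipulating the full matrix; your argument explains the multiplicities conceptually (they are dimensions of irreducibles of $A_5$), never needs the matrix itself, and generalizes to other vertex-transitive situations, but it relies on the character table of $A_5$ and on the Galois/moment argument, whose uniqueness claim you asserted rather than verified.
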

The matrix $H_{F_\PPP}^{1}(1,1,\ldots, 1)$ is in Section \ref{Hessian matrices}. 
Also the proof of Proposition \ref{eigen n=20} is in Section \ref{Hessian matrices}. 

To summarize of this section, we obtain the following. 

\begin{Theorem}\label{n=20}
The algebra associated to the regular icosahedron satisfies the strong Lefschetz property with a strong Lefschetz element $\sum_{i=1}^{20}\der_{i}$, 
but does not satisfy the Hodge--Riemann relation with respect to $\sum_{i=1}^{20}\der_{i}$. 
\end{Theorem}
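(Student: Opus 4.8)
The plan is to deduce both halves of the statement from the eigenvalue list of the first Hessian supplied by Proposition \ref{eigen n=20}, feeding it into Theorem \ref{SLP and Hess} for the Lefschetz part and into Proposition \ref{HRR and Hess} for the Hodge--Riemann part. First I would record the coarse structure of $A_{\PPP}$: since $F_{\PPP}$ is homogeneous of degree $3$, the socle degree is $s=3$, and as every vertex lies in at least one face, $\dim A_{1}=12$, so by Gorenstein duality the Hilbert series is $(1,12,12,1)$. Writing $\ell=\ell_{\bm{a}}=\sum_{i=1}^{12}\der_{i}$ for $\bm{a}=(1,\dots,1)\in\RR^{12}$, the conditions \eqref{slp} and \eqref{hrr} need only be examined for the integers $k\le s/2$, namely $k=0$ and $k=1$.

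For the strong Lefschetz property I would check the two multiplication maps separately. The map $\times\ell\colon A_{1}\to A_{2}$ is bijective, by Theorem \ref{SLP and Hess}, precisely when $\det H^{1}_{F_{\PPP}}(\bm{a})\neq 0$; Proposition \ref{eigen n=20} gives the eigenvalues $10,-2,2\sqrt{5},-2\sqrt{5}$ with multiplicities $1,5,3,3$, all nonzero, so the determinant (their product) does not vanish. The remaining map $\times\ell^{3}\colon A_{0}\to A_{3}$ corresponds under Theorem \ref{SLP and Hess} to the $0$th Hessian, which by definition is $F_{\PPP}$ itself, and $F_{\PPP}(1,\dots,1)=20\neq 0$ is simply the number of faces; hence this map is bijective too. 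Therefore $\ell$ is a strong Lefschetz element and $A_{\PPP}$ has the strong Lefschetz property.

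For the failure of the Hodge--Riemann relation I would invoke Proposition \ref{HRR and Hess}, which applies because $F_{\PPP}(1,\dots,1)=20>0$: the relation \eqref{hrr} at $k=1$ holds with respect to $\ell$ if and only if $H^{1}_{F_{\PPP}}(\bm{a})$ has exactly one positive eigenvalue and $11$ negative ones. But the list from Proposition \ref{eigen n=20} yields $1+3=4$ positive eigenvalues (from $10$ and $2\sqrt{5}$) and $5+3=8$ negative eigenvalues (from $-2$ and $-2\sqrt{5}$), i.e.\ signature $(4,8)$ rather than $(1,11)$. Thus \eqref{hrr} already fails at degree one, and $A_{\PPP}$ does not satisfy the Hodge--Riemann relation with respect to $\ell$.

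Everything above is a short sign count once the spectrum is known, so the real work is Proposition \ref{eigen n=20}. The hard part will be the explicit diagonalization of the $12\times 12$ matrix $H^{1}_{F_{\PPP}}(1,\dots,1)$; I expect the cleanest route is to observe that, because $F_{\PPP}$ is multilinear and each edge of the icosahedron lies in exactly two triangular faces, this Hessian equals $2A$, where $A$ is the adjacency matrix of the icosahedral skeleton, and then to read off the icosahedron spectrum $5,\sqrt{5},-1,-\sqrt{5}$ (multiplicities $1,3,5,3$) by exploiting the symmetry group of the icosahedron. No step in the reduction to Theorem \ref{SLP and Hess} and Proposition \ref{HRR and Hess} presents any difficulty beyond this eigenvalue computation.
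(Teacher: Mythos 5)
Your reduction of Theorem \ref{n=20} to the spectrum is exactly the paper's: it too obtains the theorem by feeding Proposition \ref{eigen n=20} into Theorem \ref{SLP and Hess} (nonzero eigenvalues, hence $\det H^{1}_{F_{\PPP}}(1,\ldots,1)\neq 0$ and the degree-one map is bijective) and into Proposition \ref{HRR and Hess} (signature $(4,8)$ instead of the required $(1,11)$, so Hodge--Riemann fails at degree one); your explicit check of the $k=0$ map via the $0$th Hessian $F_{\PPP}(1,\ldots,1)=20$ is a point the paper leaves implicit, and you are right that the element in the statement should be read as $\sum_{i=1}^{12}\der_{i}$, the icosahedron having $12$ vertices. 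Where you genuinely diverge is the proof of Proposition \ref{eigen n=20} itself, which is the actual content. The paper (Section \ref{Hessian matrices}) computes the spectrum by hand: it uses the cyclic symmetry of its vertex labelling to block-decompose the $12\times 12$ matrix, reduces via \cite{Y2018} to $2\times 2$ matrices indexed by fifth roots of unity, and writes down explicit eigenvectors for each eigenvalue. You instead note that, since $F_{\PPP}$ is squarefree of degree $3$ and every edge of the icosahedron lies in exactly two triangular faces, $H^{1}_{F_{\PPP}}(1,\ldots,1)=2A$ with $A$ the adjacency matrix of the icosahedral graph, and then invoke its classical spectrum $5,\sqrt{5},-1,-\sqrt{5}$ with multiplicities $1,3,5,3$; doubling reproduces the paper's list, and the identification is visibly consistent with the $0/2$ matrix printed in the paper. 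Your route is shorter and delegates the work to standard spectral graph theory; the paper's is self-contained and additionally produces the eigenvectors. Two small caveats: you should actually cite or prove the icosahedral spectrum rather than call it readable ``by exploiting the symmetry group'' (that symmetry argument is precisely the paper's root-of-unity computation, so as written you have deferred, not eliminated, the hard step); and your justification that $\dim A_{1}=12$ (every vertex lies in a face) only gives $\der_{i}\neq 0$ in $A_{1}$ --- linear independence, which is needed for the usual $12\times 12$ Hessian to be the matrix $H^{1}_{F_{\PPP}}$ of Theorem \ref{SLP and Hess}, follows instead from nondegeneracy of the evaluated Hessian, since a linear relation among the $\der_{i}$ in $A_{1}$ would force a kernel vector of the polynomial Hessian matrix at every point.
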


\section{Hessian matrices}\label{Hessian matrices}

Here, we calculate the Hessian matrices of the homogeneous polynomials of the regular polyhedra. 

\subsection{Regular tetrahedron}\label{appen n=4}

Let $\PPP$ be the regular tetrahedron. 
The matrix $H_{F_\PPP}^{1}(1,1,1,1)$ is 
\begin{align*}
\begin{pmatrix}
0&2&2&2 \\
2&0&2&2 \\
2&2&0&2 \\
2&2&2&0 \\
\end{pmatrix}. 
\end{align*}
The eigenvalues of the matrix are $6, -2, -2, -2$. 

\subsection{Regular hexahedron}\label{appen n=6}

Let $\PPP$ be the regular hexahedron. 
The matrix $H_{F_\PPP}^{1}(1,1,\ldots, 1)$ is the following, and has the following block decomposition:  
\begin{align*}
\left(
\begin{array}{cccc|cccc}
0&2&1&2&2&1&0&1 \\
2&0&2&1&1&2&1&0 \\
1&2&0&2&0&1&2&1 \\
2&1&2&0&1&0&1&2 \\ \hline
2&1&0&1&0&2&1&2 \\
1&2&1&0&2&0&2&1 \\
0&1&2&1&1&2&0&2 \\
1&0&1&2&2&1&2&0 \\
\end{array}
\right). 
\end{align*}
By \cite[Section 2]{Y2018}, the eigenvalues of the matrix come from the eigenvalues of the following four matrices $M_0, M_1, M_2, M_3$: 
\begin{align*}
M_0=
\begin{pmatrix}
5&4 \\
4&5
\end{pmatrix}, &&
M_1=M_3=
\begin{pmatrix}
-1&2 \\
2&-1
\end{pmatrix}, &&
M_2=
\begin{pmatrix}
-3&0 \\
0&-3
\end{pmatrix}. 
\end{align*}
The eigenvalues of $M_0$ are $9,1$. 
The eigenvalues of $M_1=M_3$ are $1,-3$. 
The eigenvalues of $M_2$ are $-3,-3$. 
Hence we have Proposition \ref{eigen n=6}. 

By Section \ref{Main results}, we know that 
\begin{align*}
\Lambda_{2}=
\Set{
\der_{1}\der_{3}, \der_{2}\der_{4}, \der_{1}\der_{8}, \der_{4}\der_{5}, \der_{2}\der_{5}, 
\der_{1}\der_{6}, \der_{3}\der_{6}, \der_{2}\der_{7}, \der_{4}\der_{7}, \der_{3}\der_{8}, 
\der_{6}\der_{8}, \der_{5}\der_{7}, \der_{1}\der_{2}, \der_{1}\der_{4}, \der_{1}\der_{5}, 
\der_{2}\der_{6}, \der_{2}\der_{3}, \der_{3}\der_{4}
}
\end{align*}
generates $A_{2}$. 
Let us calculate the second Hessian matrix with respect to $\Lambda_{2}$ is  
\begin{align*}
\begin{pmatrix}
A&\zero&\zero&\zero&\zero&\zero&\zero&\zero&\zero \\
\zero&A&\zero&\zero&\zero&\zero&\zero&\zero&\zero \\
\zero&\zero&A&\zero&\zero&\zero&\zero&\zero&\zero \\
\zero&\zero&\zero&A&\zero&\zero&\zero&\zero&\zero \\
\zero&\zero&\zero&\zero&A&\zero&\zero&\zero&\zero \\
\zero&\zero&\zero&\zero&\zero&A&\zero&\zero&\zero \\
\zero&\zero&\zero&\zero&\zero&\zero&A&\zero&\zero \\
\zero&\zero&\zero&\zero&\zero&\zero&\zero&A&\zero \\
\zero&\zero&\zero&\zero&\zero&\zero&\zero&\zero&A \\
\end{pmatrix}, 
\end{align*}
where
\begin{align*}
A=
\begin{pmatrix}
0&1 \\
1&0
\end{pmatrix}, 
\zero=
\begin{pmatrix}
0&0 \\
0&0
\end{pmatrix}. 
\end{align*}
The eigenvalues are $1$ and $-1$. 
The multiplicity of both are $9$.  
Since the second Hessian with respect to $\Lambda_{2}$ does not vanish, $\Lambda_{2}$ is basis for $A_{2}$.  

\subsection{Regular octahedron}\label{appen n=8}

Let $\PPP$ be the regular octahedron. 
By Proposition \ref{prop reduce}, the algebra $A_{\PPP}$ is isomorphic to the graded Artinian Gorenstein algebra $\RR[\der_{1}, \der_{2}, \der_{3}]/(\der_{1}^{2}, \der_{2}^{2}, \der_{3}^{2})$. 
If $F=x_{1}x_{2}x_{3}$, then $\RR[\der_{1}, \der_{2}, \der_{3}]/(\der_{1}^{2}, \der_{2}^{2}, \der_{3}^{2})=\RR[\der_{1}, \der_{2}, \der_{3}]/\Ann(F)$. 

The first Hessian matrix of $F$ evaluated $1$ is the following. 
\begin{align*}
\begin{pmatrix}
0&1&1 \\
1&0&1 \\
1&1&0
\end{pmatrix}. 
\end{align*}
The eigenvalues of the matrix are $2, -1, -1$. 

\subsection{Regular dodecahedron}\label{appen n=12}

Let $\PPP$ be the regular dodecahedron. 
The socle degree of $A_{\PPP}$ is $5$, in other words, $A_{\PPP}=\bigoplus_{k=0}^{5}A_{k}$. 
We calculate the first and second Hessian matrices of $F_{\PPP}$. 

At first, we consider the first Hessian. 
The matrix $H_{F_\PPP}^{1}(1,1,\ldots, 1)$ is degenerate. 
In fact, if we set 
\begin{align*}
A=
\begin{pmatrix}
0&2&1&1&2 \\
2&0&2&1&1 \\
1&2&0&2&1 \\
1&1&2&0&2 \\
2&1&1&2&0 \\
\end{pmatrix}, &&
B=
\begin{pmatrix}
2&1&0&0&1 \\
1&2&1&0&0 \\
0&1&2&1&0 \\
0&0&1&2&1 \\
1&0&0&1&2 \\
\end{pmatrix}, &&
C=
\begin{pmatrix}
0&0&1&1&0 \\
0&0&0&1&1 \\
1&0&0&0&1 \\
1&1&0&0&0 \\
0&1&1&0&0 \\
\end{pmatrix}, \\
D=
\begin{pmatrix}
0&1&0&0&1 \\
1&0&1&0&0 \\
0&1&0&1&0 \\
0&0&1&0&1 \\
1&0&0&1&0 \\
\end{pmatrix}, &&
\zero=
\begin{pmatrix}
0&0&0&0&0 \\
0&0&0&0&0 \\
0&0&0&0&0 \\
0&0&0&0&0 \\
0&0&0&0&0 \\
\end{pmatrix}, &&
A'=
\begin{pmatrix}
0&1&1&1&1 \\
1&0&1&1&1 \\
1&1&0&1&1 \\
1&1&1&0&1 \\
1&1&1&1&0 \\
\end{pmatrix}, 
\end{align*}
then the matrix $H_{F_\PPP}^{1}(1,1,\ldots, 1)$ is
\begin{align*}
\begin{pmatrix}
A&B&\zero&C \\
B&D&C&2C \\
\zero&C&A&B \\
C&2C&B&D \\
\end{pmatrix}. 
\end{align*}
One can check that the matrix $H_{F_\PPP}^{1}(1,1,\ldots, 1)$ is degenerate. 
By Theorem \ref{SLP and Hess}, the linear element $\sum_{i=1}^{20}\der_{i}$ is not a strong Lefschetz element of $A_{\PPP}$.  
Instead of the matrix, we consider another matrix $H'$ evaluated the following:
\begin{align*}
\text{$x_{i}=1$ \ for $i \not\in \Set{6,7,8,9,10}$}, \\
\text{$x_{i}=0$ \ for $i \in \Set{6,7,8,9,10}$}. 
\end{align*}
The matrix $H'$ is the following, and has the following block decomposition 
\begin{align*}
\left(
\begin{array}{c|ccc}
A'&\zero&\zero&\zero \\ \hline
\zero&D&C&C \\
\zero&C&A'&\zero \\
\zero&C&\zero&\zero \\
\end{array}
\right). 
\end{align*}
The eigenvalues of $A'$ is $4$, $-1$. 
The multiplicities are $1$ and $3$, respectively. 
The matrix   
\begin{align*}
\left(
\begin{array}{ccc}
D&C&C \\
C&A'&\zero \\
C&\zero&\zero \\
\end{array}
\right)
\end{align*}
is non-degenerate since $A'$ and $C$ are non-degenerate. 
Moreover, since the trace is zero, the matrix has at least one positive eigenvalue. 
Therefore, $H'$ has at least two positive eigenvalues.  

Next, we consider the second Hessian. 
The second Hessian matrix of $F_{\PPP}$ is too large to calculate by hands, so we use CoCalc \cite{CoCalc} to calculation. 
\begin{lstlisting}[basicstyle=\ttfamily\footnotesize, frame=single, numbers=left]
xx=[var("x%d" % i) for i in range(20)]

Facet=[
[0, 1, 2, 3, 4], 
[0, 1, 5, 6, 18], 
[1, 2, 6, 7, 19], 
[2, 3, 7, 8, 15], 
[3, 4, 8, 9, 16], 
[0, 4, 5, 9, 17], 
[10, 11, 12, 13, 14], 
[8, 10, 11, 15, 16], 
[9, 11, 12, 16, 17], 
[5, 12, 13, 17, 18], 
[6, 13, 14, 18, 19], 
[7, 10, 14, 15, 19]
]

F=sum(prod(xx[i] for i in f) for f in Facet)

A2=Combinations(xx,2).list()
Basis=[]
for i in A2:
    if (F.derivative(i[0])).derivative(i[1])==0:
        pass
    else:
        Basis=Basis+[i]

ss={}
for xi in xx:
    ss[xi]=1
ss[x0]=0

h1=[F.derivative(i[0]).derivative(i[1]) for i in Basis]
h=[[hi.derivative(j[0]).derivative(j[1]) for j in Basis] for hi in h1]
H=Matrix([[ZZ(hij.substitute(ss)) for hij in hi] for hi in h])
det(H)
\end{lstlisting}

By this program, you obtain $\det H^{2}_{F_{\PPP}}(\bm{a})=342456532992$, where $\bm{a}=(a_{i})_{i}\in\RR^{20}$ with $a_{1}=0$ and $a_{i}=1$ for $i\neq 1$.

\subsection{Regular icosahedron}\label{appen n=20}

The first Hessian matrix $H_{F_\PPP}^{1}(1,1,\ldots, 1)$ of the homogeneous polynomial of the regular icosahedron is the following, and has the following block decomposition:

\begin{align*}
\left(
\begin{array}{ccccc|ccccc|c|c}
0&2&0&0&2&2&0&0&0&2&2&0 \\
2&0&2&0&0&2&2&0&0&0&2&0 \\
0&2&0&2&0&0&2&2&0&0&2&0 \\
0&0&2&0&2&0&0&2&2&0&2&0 \\
2&0&0&2&0&0&0&0&2&2&2&0 \\ \hline
2&2&0&0&0&0&2&0&0&2&0&2 \\
0&2&2&0&0&2&0&2&0&0&0&2 \\
0&0&2&2&0&0&2&0&2&0&0&2 \\
0&0&0&2&2&0&0&2&0&2&0&2 \\
2&0&0&0&2&2&0&0&2&0&0&2 \\ \hline
2&2&2&2&2&0&0&0&0&0&0&0 \\ \hline
0&0&0&0&0&2&2&2&2&2&0&0 \\
\end{array}
\right). 
\end{align*}

By \cite[Section 2]{Y2018}, the eigenvalues of the matrix are come from the eigenvalues of the following five matrices $M_0, M_1, M_2, M_3, M_4$: 
Let $\zeta_{5}$ be the primitive $5$th root of unity and $k\in\Set{1,2,3,4}$. 
Then we define the matrices $M_0, M_1, M_2, M_3, M_4$ by 
\begin{align*}
M_0=
\left(
\begin{array}{cc|cc}
4&4&2&0 \\
4&4&0&2 \\ \hline
10&0&0&0 \\
0&10&0&0 \\
\end{array}
\right),
&&
M_k=
\begin{pmatrix}
2(\zeta_{5}^{k}+\zeta_{5}^{-k})&2(1+\zeta_{5}^{-k})&2&0 \\
2(1+\zeta_{5}^{k})&2(\zeta_{5}^{k}+\zeta_{5}^{-k})&0&2 \\
0&0&0&0 \\
0&0&0&0 \\
\end{pmatrix}. 
\end{align*}
Note that $M_1=M_4$ and $M_2=M_3$. 
Moreover, the eigenvalues of $M_0$ come from the eigenvalues of the following two matrices $M_{0}', M_{0}''$, and 
the eigenvalues of $M_{k}$ come from some eigenvalues of the following two matrices $M_{k}', M_{k}''$: 
\begin{align*}
M_{0}'&=
\begin{pmatrix}
8&2 \\
10&0 \\
\end{pmatrix}, &
M_{0}''&=
\begin{pmatrix}
0&2 \\
10&0 \\
\end{pmatrix}, \\
M_{k}'&=
\begin{pmatrix}
2(\zeta_{5}^{k}+\zeta_{5}^{-k})&2(1+\zeta_{5}^{-k}) \\
2(1+\zeta_{5}^{k})&2(\zeta_{5}^{k}+\zeta_{5}^{-k}) \\
\end{pmatrix}, &
M_{k}''&=
\begin{pmatrix}
0&0 \\
0&0 \\
\end{pmatrix}. 
\end{align*}
For $M_{0}'$, the vectors
\begin{align*}
\begin{pmatrix}
1 \\
1
\end{pmatrix}, &&
\begin{pmatrix}
1 \\
-5
\end{pmatrix}
\end{align*}
are the eigenvectors associated to the eigenvalues $10$ and $-2$, respectively. 
For $M_{0}''$, the vectors
\begin{align*}
\begin{pmatrix}
1 \\
\sqrt{5}
\end{pmatrix}, &&
\begin{pmatrix}
1 \\
-\sqrt{5}
\end{pmatrix}
\end{align*}
are the eigenvectors associated to the eigenvalues $2\sqrt{5}$ and $-2\sqrt{5}$, respectively. 
For $M_{k}'$, the vectors 
\begin{align*}
\begin{pmatrix}
2+\zeta_{5}^{k}+\zeta_{5}^{-k} \\
-(1+\zeta_{5}^{k})
\end{pmatrix}, &&
\begin{pmatrix}
1+\zeta_{5}^{-k} \\
-(\zeta_{5}^{2k}+\zeta_{5}^{-2k})
\end{pmatrix}
\end{align*}
are eigenvectors associated to the eigenvalues $-2$ and $2\left((\zeta_{5}^{k}+\zeta_{5}^{-k})-(\zeta_{5}^{2k}+\zeta_{5}^{-2k})\right)$, respectively. 
For $k\in\Set{1,2,3,4}$, we have
\begin{align*}
\left((\zeta_{5}^{k}+\zeta_{5}^{-k})-(\zeta_{5}^{2k}+\zeta_{5}^{-2k})\right)^{2}=5. 
\end{align*}
If $k=1,4$, then 
\begin{align*}
(\zeta_{5}^{k}+\zeta_{5}^{-k})-(\zeta_{5}^{2k}+\zeta_{5}^{-2k})>0. 
\end{align*}
If $k=2,3$, then   
\begin{align*}
(\zeta_{5}^{k}+\zeta_{5}^{-k})-(\zeta_{5}^{2k}+\zeta_{5}^{-2k})<0.
\end{align*}
Hence the eigenvalues of $M_1=M_4$ are $-2$, $2\sqrt{5}$, and the eigenvalues of $M_2=M_3$ are $-2$, $-2\sqrt{5}$. 
For $k\in\Set{0,1,2,3,4}$, define 
\begin{align*}
\zz_{k}=
\begin{pmatrix}
1 \\
\zeta_{5}^{k} \\
\zeta_{5}^{2k} \\
\zeta_{5}^{3k} \\
\zeta_{5}^{4k} \\
\end{pmatrix}. 
\end{align*}
Then, by \cite[Lemma 2.1]{Y2018}, the vector
\begin{align*}
\begin{pmatrix}
\zz_{0} \\
\zz_{0} \\
1 \\
1
\end{pmatrix} 
\end{align*}
is a eigenvector of $H_{F_\PPP}^{1}(1,1,\ldots, 1)$ associated to the eigenvalue $10$. 
The vectors
\begin{align*}
\begin{pmatrix}
\zz_{0} \\
\zz_{0} \\
-5 \\
-5
\end{pmatrix}, 
\begin{pmatrix}
\left(2+\zeta_{5}+\zeta_{5}^{-1}\right) \zz_{1} \\
-(1+\zeta_{5})\zz_{1} \\
0 \\
0
\end{pmatrix}, 
\begin{pmatrix}
\left(2+\zeta_{5}^{2}+\zeta_{5}^{-2}\right) \zz_{2} \\
-(1+\zeta_{5}^{2})\zz_{2} \\
0 \\
0
\end{pmatrix}, \\
\begin{pmatrix}
\left(2+\zeta_{5}^{-2}+\zeta_{5}^{2}\right) \zz_{3} \\
-(1+\zeta_{5}^{-2})\zz_{3} \\
0 \\
0
\end{pmatrix}, 
\begin{pmatrix}
\left(2+\zeta_{5}^{-1}+\zeta_{5}\right) \zz_{4} \\
-(1+\zeta_{5}^{-1})\zz_{4} \\
0 \\
0
\end{pmatrix}
\end{align*}
are the eigenvectors of $H_{F_\PPP}^{1}(1,1,\ldots, 1)$ associated to the eigenvalue $-2$. 
The vectors
\begin{align*}
\begin{pmatrix}
\zz_{0} \\
-\zz_{0} \\
\sqrt{5} \\
-\sqrt{5}
\end{pmatrix}, 
\begin{pmatrix}
\left(1+\zeta_{5}^{-1}\right) \zz_{1} \\
-(\zeta_{5}^{2}+\zeta_{5}^{-2})\zz_{1} \\
0 \\
0
\end{pmatrix}, 
\begin{pmatrix}
\left(1+\zeta_{5}\right) \zz_{4} \\
-(\zeta_{5}^{-2}+\zeta_{5}^{2})\zz_{4} \\
0 \\
0
\end{pmatrix}. 
\end{align*}
are the eigenvectors of $H_{F_\PPP}^{1}(1,1,\ldots, 1)$ associated to the eigenvalue $2\sqrt{5}$. 
The vectors
\begin{align*}
\begin{pmatrix}
\zz_{0} \\
-\zz_{0} \\
-\sqrt{5} \\
\sqrt{5}
\end{pmatrix}, 
\begin{pmatrix}
\left(1+\zeta_{5}^{-2}\right) \zz_{2} \\
-(\zeta_{5}^{-1}+\zeta_{5})\zz_{2} \\
0 \\
0
\end{pmatrix}, 
\begin{pmatrix}
\left(1+\zeta_{5}^{2}\right) \zz_{3} \\
-(\zeta_{5}+\zeta_{5}^{-1})\zz_{3} \\
0 \\
0
\end{pmatrix}
\end{align*}
are the eigenvectors of $H_{F_\PPP}^{1}(1,1,\ldots, 1)$ associated to the eigenvalues $-2\sqrt{5}$.

Finally, we obtain Proposition \ref{eigen n=20}.


\bibliographystyle{amsplain-url}
\bibliography{mr}

\end{document}